\documentclass[11 pt]{article}
\usepackage[latin1]{inputenc}
\usepackage{amsmath,amsfonts,stmaryrd,hyperref,amsthm, enumitem, graphicx, amssymb}

\usepackage{fullpage}
\usepackage{authblk}

\newtheorem{thm}{Theorem}
\newtheorem{lemma}[thm]{Lemma}
\newtheorem{prop}[thm]{Proposition}

\newtheorem{conjecture}[thm]{Conjecture}
\theoremstyle{definition}

\newtheorem{rem}{Remark}

\newcommand{\qq}{\mathbf{q}}
\newcommand{\vv}{\mathbf{v}}
\newcommand{\ww}{\mathbf{w}}
\newcommand{\sst}{\mathbf{s}}
\newcommand{\ee}{\mathbf{e}}
\newcommand{\XX}{\mathbf{X}}
\newcommand{\YY}{\mathbf{Y}}

\newcommand{\qginf}{\qq_\infty^{(g)}}
\newcommand{\qgn}{\qq_{n}^{(g)}}
\newcommand{\QQgn}{\mathcal{Q}_n^{(g)}}
\newcommand{\pnalpha}{p_{n,\alpha}}

\title{On tessellations of random maps and the $t_g$-recurrence}

\author{Guillaume Chapuy%
\thanks{
	Support from the \emph{European Research Council}, grant ERC-2016-STG 716083 ``CombiTop'', 
from \emph{Agence Nationale de la Recherche}, grant number ANR~12-JS02-001-01 ``Cartaplus'', and from the City of Paris, grant ``\'Emergences Paris 2013, Combinatoire \`a Paris''. Part of this research was done while I was affiliated with the \small\it {\sc CRM, UMI CNRS 3457},
Universit\'e de Montr\'eal,
Canada.
Email:~{\tt guillaume.chapuy@irif.fr}.
}
}
\affil{\small\it {\sc IRIF, UMR CNRS 8243},
Universit\'e Paris-Diderot,
France.}

\begin{document}
\maketitle
\begin{abstract}
	We study the masses of the two cells in a Vorono\"i tessellation of the Brownian surface of genus $g\geq 0$ centered on two uniform random points. Making use of classical bijections  and asymptotic estimates for maps of fixed genus, we relate the second moment of these random variables to the Painlev\'e-I equation satisfied by the double scaling limit of the one-matrix model, or equivalently to the ``$t_g$-recurrence'' satisfied by the constants $t_g$ driving the asymptotic number of maps of genus $g\geq0$. This raises the question of giving an independent probabilistic or combinatorial derivation of this second moment, which would then lead to new proof of the $t_g$-recurrence.

	More generally we conjecture that for any $g\geq 0$ and $k\geq 2$, the  masses of the cells in a Vorono\"i tessellation of the genus-$g$ Brownian surface by $k$ uniform points follows a Dirichlet$(1,1,\dots,1)$ distribution.

\end{abstract}

\section{Introduction and results}

In this paper a \emph{map} is a graph embedded without edge crossings on a closed oriented surface, in such a way that the connected components of the complement of the graph, called \emph{faces}, are each homeomorphic to a disk. Loops and multiple edges are allowed, and maps are considered up to orientation preserving homeomorphisms. A map is \emph{rooted} if an edge is distinguished and oriented. The number $m_g(n)$ of rooted maps with $n$ edges on the surface of genus $g$ satisfies, for fixed $g\geq 0$ and $n\rightarrow \infty$:
\begin{eqnarray}\label{eq:mgn}
m_g(n) \sim t_g n^{\frac{5(g-1)}{2}} 12^n, \mbox{ for }t_g >0.
\end{eqnarray}
In genus $0$, this result follows from the exact formula $m_0(n)=\frac{2\cdot 3^n}{(n+2)(n+1)}{2n \choose n}$ due to Tutte~\cite{Tutte:census}. In higher genus, it was proved in~\cite{BC0} using generating functions.
A direct combinatorial interpretation of Tutte's formula for maps of genus $0$ was given by Cori and Vauquelin~\cite{CV} and much simplified by Schaeffer~\cite{Schaeffer:phd,ChassaingSchaeffer}. A combinatorial interpretation of~\eqref{eq:mgn} was given in~\cite{CMS} using the Marcus-Schaeffer bijection~\cite{MS} and further developped in~\cite{Chapuy:trisections}.

None of the methods just mentioned enable to say much about the sequence of constants $(t_g)_{g\geq 0}$ that appear in~\eqref{eq:mgn}, and indeed these references give explicit values only for very small values of~$g$. There is however a remarkable recurrence formula to compute these numbers, that we call the \emph{$t_g$-recurrence}. It is better expressed in terms of the numbers $\tau_g=2^{5g-2}\Gamma\left(\frac{5g-1}{2}\right)t_g$ and is given by:
\begin{eqnarray}\label{eq:tg}
\tau_{g+1} = \frac{(5g+1)(5g-1)}{3}\tau_{g}+\tfrac{1}{2}\sum_{g_1=1}^{g}\tau_{g_1}\tau_{g+1-g_1}, \ \ g \geq 0,
\end{eqnarray}
which enables to compute these numbers easily starting from $\tau_0=-1$. This result was first stated in mathematical physics in relation with the \emph{double scaling limit} of the one-matrix model, and  obtained via a non-rigorous scaling of expressions involving orthogonal polynomials  (we refer to~\cite[p201]{LZ} for historical references). A more algebraic  approach is based on the fact that the partition function of maps on surfaces, with infinitely many parameters marking vertex degrees, is a tau-function of the KP hierarchy. Going from the KP hierarchy to the recurrence \eqref{eq:tg} (or to an equivalent Painlev\'{e}-I ODE for an associated generating function) relies on a trick of elimination of variables that can be performed in different ways and whose generality is, as far as we know, yet to be fully understood (for the case of triangulations see \cite[Appendix B.]{mcfly} or \cite{GJ, tg} and for general maps see\cite{CC}).

The main contribution of this paper is to relate the recurrence \eqref{eq:tg} to another side of the story, namely the study of random maps and their scaling limits. We refer to~\cite{miermont:survey} for an introduction to this topic.
To state our main result we first need a few more definitions. A \emph{quadrangulation} is a map in which each face contains exactly four corners, {\it i.e.} is bordered by exactly four edge-sides. It is \emph{bipartite} if its vertices can be colored in black and white in such a way that there is no monochromatic edge.
 For each $n,g\geq0$, there is a classical bijection, due to Tutte, between rooted maps of genus $g$ with $n$ edges and rooted bipartite quadrangulations of genus $g$ with $n$ faces.

For $n,g\geq 0$, we let $\QQgn$ be the set of rooted bipartite quadrangulations of genus $g$ with $n$ faces (with the convention that there is a single quadrangulation with $0$ face, which has genus $0$, no edge, and two vertices). We let $\qgn \in_u \QQgn$ be a bipartite quadrangulation of genus $g$ with $n$ faces chosen uniformly at random (the notation $\in_u$ to denote a uniform random element of a set will be used throughout). We equip the vertex set of $\qgn$ with the graph distance, noted $\mathbf{d}_n$, and with the uniform measure, noted $\mathbf{\mu}_n$. This makes $\qgn\equiv (\qgn,\mathbf{d}_n,\mu_n)$ into a compact measured metric space. The set of (isometry classes of) such spaces is equipped with the Gromov-Hausdorff-Prokhorov (GHP) topology as in~\cite[Sec.~6]{Miermont:tessellations}. 
A \emph{Brownian surface of genus $g$}~\cite{Bettinelli1, Bettinelli, BettinelliMiermont} is a random compact measured metric space $(\qq_\infty, d_\infty, \mu_\infty)$ that is such that:
$$
(\qgn,\tfrac{1}{n^{1/4}}\mathbf{d}_n,\mu_n)
\longrightarrow
(\qginf, d_\infty, \mu_\infty),
$$
in distribution along some subsequence for the GHP topology.  The existence of Brownian surfaces of genus $g$ was proved in~\cite{Bettinelli1}, and their uniqueness for each $g \geq 1$ has been announced by Bettinelli and Miermont~\cite{BettinelliMiermont} (in genus $0$ the uniqueness is an important result proved independently by Miermont~\cite{Miermont:GH} and Le Gall~\cite{LG:GH}). However the uniqueness of the limit is not needed for our discussion since we will prove the convergence of all the observables we are interested in. Also note that some authors prefer to introduce an additional scaling factor $(8/9)^{1/4}$ to the distance, but this is irrelevant to our discussion so we choose to avoid it.

\begin{thm}\label{thm:obs1}
For $g\geq 0$,  let $(\qginf, d_\infty, \mu_\infty)$ be a Brownian surface of genus $g$. Let $\vv_1,\vv_2 \in \qq^{(g)}_\infty$ be chosen independently according to the probability  measure $\mu_\infty$, and let $\XX_g, 1-\XX_g$ be the masses of the corresponding cells in the nearest neighbour tessellation of $\qginf$ induced by $\vv_1$ and $\vv_2$, that is to say:
$$
\XX_g := \mu_\infty\Big(\big\{x\in \qginf,\ d_\infty(x,\vv_1)<\mathbf{d}_\infty(x,\vv_2) \big\}\Big).
$$
 Then the sequence of numbers $\tau_g=2^{5g-2}\Gamma\left(\frac{5g-1}{2}\right)t_g$ satisfies:
$$
\tau_{g+1} =  2(5g+1)(5g-1)\tau_{g}\cdot \mathbf{E}[\XX_{g}(1-\XX_g)] 
+\frac{1}{2}\sum_{g_1=1}^{g}\tau_{g_1}\tau_{g+1-g_1}, \ \ \ g\geq 0.
$$
\end{thm}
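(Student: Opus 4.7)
My strategy is to derive the identity by purely combinatorial/bijective means, so as to make the quantity $\mathbf{E}[\XX_g(1-\XX_g)]$ appear as the scaling limit of an explicit discrete statistic; no knowledge of its actual value is used (indeed, the value $1/6$ is only obtained \emph{a posteriori} by comparison with the classical $t_g$-recurrence~\eqref{eq:tg}, which is exactly the point of the paper). Concretely, I would start from a finite-$n$ combinatorial identity relating enumerations of genus $g+1$ bipartite quadrangulations to enumerations of genus $g$ bipartite quadrangulations equipped with two marked source vertices. Such an identity should come from a Miermont-style slicing~\cite{Miermont:tessellations} (or an equivalent trisection-type construction in the spirit of~\cite{Chapuy:trisections}), cutting along the canonical boundary cycle between the two Voronoi-type cells associated to the sources. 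The key topological dichotomy is whether this cycle is separating or not: separating cuts yield a pair of rooted quadrangulations of genera $g_1$ and $g+1-g_1$ each with one marked vertex, while non-separating cuts yield a single rooted genus $g$ quadrangulation with two marked vertices lying in opposite cells. After summation over $n$ and passage to asymptotics via~\eqref{eq:mgn}, the separating case contributes, thanks to the Gamma normalization in $\tau_g$, exactly the quadratic sum $\tfrac12\sum_{g_1=1}^g \tau_{g_1}\tau_{g+1-g_1}$.

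The non-separating case is the heart of the matter. After cutting, one obtains a rooted genus-$g$ quadrangulation with two marked vertices $\vv_1,\vv_2$, and the enumerative weight attached to the configuration is essentially the number of pairs of further vertices lying in the two opposite Voronoi-type cells of $\vv_1$ and $\vv_2$. Dividing this count by the squared vertex number $\sim n^2$, the resulting discrete statistic converges (by the GHP convergence to the Brownian map established in~\cite{Miermont:tessellations}) to $\mathbf{E}[\XX_g(1-\XX_g)]$. Tracking the polynomial prefactors -- combining the $n^2$ from the pair of marked vertices, the $n^{5(g-1)/2}$ from~\eqref{eq:mgn}, and the Gamma-function ratio from the definition $\tau_g=2^{5g-2}\Gamma(\tfrac{5g-1}{2})t_g$ -- should produce exactly the coefficient $2(5g+1)(5g-1)$.

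The main obstacle, I expect, is to identify (or adapt) the slicing bijection so that its combinatorial output cleanly factors through the cell-mass product, so as to recover $\mathbf{E}[\XX_g(1-\XX_g)]$ rather than some other cell-related statistic that would only coincide with it in expectation after additional averaging. A secondary, essentially routine, obstacle is the interchange of limits needed to pass from the discrete expectation to its continuum counterpart: this requires uniform integrability or moment bounds on the discrete cell-mass observable, which should follow from standard scaling-limit techniques once the right bijection has been pinned down.
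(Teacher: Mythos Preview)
Your high-level plan has the right shape --- a disconnecting/non-disconnecting dichotomy, asymptotic comparison via~\eqref{eq:mgn}, and convergence of a discrete Vorono\"i statistic to $\mathbf{E}[\XX_g(1-\XX_g)]$ --- but the concrete bijective step you propose is not the right one, and this is the genuine gap.

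You suggest cutting a genus-$(g{+}1)$ quadrangulation with two sources along ``the canonical boundary cycle between the two Vorono\"i-type cells''. No such clean cycle exists: the Vorono\"i interface in a discrete map is generically a complicated graph, not a single simple curve, and there is no known bijection that slices along it. More importantly, your description of the non-separating output (``a genus-$g$ quadrangulation with two marked vertices'' carrying an enumerative weight that counts ``pairs of further vertices in opposite cells'') is not a bijection: a bijection must produce the extra marked data explicitly, not leave it as a weight to be summed afterward. As stated, you have no mechanism that manufactures the \emph{two additional test points} needed to see the product $\XX_g(1-\XX_g)$.

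The paper avoids this entirely by moving to the \emph{labelled one-face map} side via Marcus--Schaeffer and performing the most classical operation there: root-edge deletion on a l.1.f.m.\ of genus $g{+}1$. This gives the usual Tutte dichotomy. If the map disconnects one gets two l.1.f.m.'s of genera summing to $g{+}1$, which after $Q_g^\bullet=2L_g$ and singularity analysis yields exactly the quadratic sum $\tfrac12\sum\tau_{g_1}\tau_{g+1-g_1}$. If it does not disconnect one obtains a labelled \emph{two}-face map of genus $g$ with two marked \emph{corners} (the former endpoints of the deleted root edge), whose labels differ by at most~$1$; these objects are counted by $A_g(z)$ in~\eqref{eq:Tutte}. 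Only now is Miermont's bijection applied, in the direction two-face-map $\to$ quadrangulation: the two faces become the two sources $s_1,s_2$, and the two marked corners become two marked edges $e_1,e_2$ automatically satisfying the approximate nearest-neighbour constraints~\eqref{eq:crossed}. Thus the four marked objects required for $\mathbf{E}[\XX_g(1-\XX_g)]$ (two centres, two test points) come for free from the root-edge deletion rather than being inserted by hand. The remaining work (Lemmas~\ref{lemma:convergenceCell}--\ref{lemma:equalContrib}) is then close to what you anticipate: replace the bijective constraint~(M3) by the strict Vorono\"i condition~(M'3) up to $o(n^3 m_g(n))$, drop the parity condition by averaging over the label offset, and pass to the limit via GHP convergence and Lemma~\ref{lemma:zeroProba}. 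Your worry about uniform integrability is unnecessary here, since the observable is bounded by~$1/4$.
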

By comparing Theorem~\ref{thm:obs1} with the $t_g$-recurrence~\eqref{eq:tg} we immediately deduce:
\begin{thm}\label{cor:main}
For any $g\geq 0$, the random variable $\XX_g$ satisfies
$$\mathbf{E}[\XX_g(1-\XX_g)]=\frac{1}{6},$$ or equivalently
 $\mathbf{E}\XX_g^2=\frac{1}{3}.$
\end{thm}
The reader may find surprising that $\mathbf{E}\XX_g^2$ does not depend on $g\geq0$: indeed, although it is natural to expect that \emph{local} statistics of Brownian surfaces do not depend on the genus,  the nearest-neighbour tessellation depends \emph{globally}  of the metric space $\qginf$, that \emph{is} genus dependent (see~\cite{Bettinelli}).  This suggests that there exists a simple probabilistic or combinatorial interpretation of this mysterious fact, based on a symmetry of Brownian surfaces, but we have not been able to find it. %
We emphasize that, via Theorem~\ref{thm:obs1}, such an interpretation would provide  a proof of the $t_g$-recurrence independent of orthogonal polynomials, matrix models or integrable hierarchies. More generally, this unexpected property may hide some powerful symmetries of Brownian surfaces that could be useful for other means (even in genus $0$).

It is natural to ask if other moments of the variables $\XX_g$ or related random variables are computable, and in which way they depend on the genus.
Let $\vv_1,\vv_2,\dots,\vv_k$ be $k\geq 2$ points in $\qginf$ chosen independently at random according to the ``Lebesgue'' measure $\mu_\infty$. Let $(\YY_g^{(i:k)})_{1\leq i \leq k}$ be the masses of the $k$-nearest-neighbour cells induced by the $\vv_i$'s,
\textit{i.e.} for $1\leq i \leq k$ let 
\begin{align}\label{eq:defYY}
	\YY_g^{(i:k)}:=\mu_\infty \big\{ x\in \qginf, \ \forall j\in \{1,2,\dots,k\}\setminus\{i\}, \ d_\infty(x,\vv_i)<d_\infty(x,\vv_j)\big\}.
\end{align}
We note that $\XX_g=\YY_g^{(1:2)}$, so we could have used a single notation, but we prefer to keep the lighter notation $\XX_g$ for $\YY_g^{(1:2)}$ throughout the paper. The following result is similar to, and as mysterious as Theorem~\ref{cor:main}:
\begin{thm}\label{thm:3points}
For $g\geq 0$, the masses $\YY_g^{(1:3)}, \YY_g^{(2:3)}, \YY_g^{(3:3)}$ of the Vorono\"i cells induced by three independent Lebesgue distributed points in the Brownian surface of genus $g$ satisfy, for $g\geq 0$:
$$
\mathbf{E}[\YY_g^{(1:3)}\YY_g^{(2:3)}\YY_g^{(3:3)}] 
=\frac{1}{60}.
$$
\end{thm}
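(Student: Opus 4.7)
The plan is to extend the combinatorial strategy underlying Theorem~\ref{thm:obs1} from two sources to three. First, one translates the moment into a pure enumeration problem: up to corrections that vanish in the GHP limit (and from ties among distances), the quantity $|\QQgn|\,n^6\,\mathbf{E}[\YY_g^{(1:3)}\YY_g^{(2:3)}\YY_g^{(3:3)}]$ equals the number of bipartite quadrangulations of genus $g$ with $n$ faces carrying six distinguished vertices $(v_1,v_2,v_3,u_1,u_2,u_3)$ such that $u_i$ is strictly closer to $v_i$ than to $v_j$ for every $j\ne i$.

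Next, one applies Miermont's multi-source bijection~\cite{Miermont:tessellations} with equal delays to the triple $(v_1,v_2,v_3)$. This encodes the Voronoi-partitioned quadrangulation as a well-labeled map of genus $g$ with exactly three distinguished faces (one per source), each $u_i$ becoming a marked vertex inside the corresponding face. Standard asymptotic enumeration of such three-face labeled objects, in the spirit of~\cite{BC0, Chapuy:trisections}, should then yield an asymptotic of the form $c_g\,\tau_g\,n^{(5g-1)/2+3}\,12^n$, so that, comparing with $|\QQgn|\sim t_g n^{5(g-1)/2}12^n$, the moment is identified with an explicit rational multiple of $c_g/t_g$.

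It remains to evaluate that constant. One approach is to mimic the surgery underlying Theorem~\ref{thm:obs1}: identifying pairs of boundary edges between two of the three faces either produces a higher-genus two-face labeled object, or splits the map into pieces of lower genera, giving a recursive identity among the $c_{g}$'s, the $\tau_g$'s, and the two-source constants already controlled by Theorem~\ref{cor:main}. Combined with the $t_g$-recurrence~\eqref{eq:tg}, this should force $c_g/t_g$ to be genus-independent with the value matching the Dirichlet$(1,1,1)$ prediction $\Gamma(3)\Gamma(2)^3/\Gamma(6)=1/60$ singled out in the abstract.

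The main obstacle is exactly the genus-independence phenomenon that makes Theorem~\ref{cor:main} striking: the enumerative analysis of three-face labeled maps naturally produces genus-dependent quantities, and distilling a universal constant from them requires either a delicate cancellation or an integrable-systems input (an analogue of the $t_g$-recurrence for three-face objects, perhaps along the lines of the elimination procedures discussed in the introduction). I expect this to be the technical heart of the proof, and the very place where a genuinely probabilistic or bijective derivation of $1/60$ would shed the most light on the Brownian map symmetry underlying both Theorems~\ref{cor:main} and~\ref{thm:3points}.
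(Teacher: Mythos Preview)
Your proposal correctly identifies the broad architecture --- Miermont's bijection converts the moment into the enumeration of labelled three-face maps of genus $g$ with one marked vertex per face --- but it is a plan rather than a proof, and the mechanism you propose for computing the constant is not the one that actually works.

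The paper does \emph{not} enumerate three-face labelled maps directly, nor via the surgery you suggest (gluing two of the three faces to produce two-face objects and then invoking Theorem~\ref{cor:main}). The missing idea is the \emph{trisection lemma} of~\cite{Chapuy:trisections}: a dominant one-face map of genus $g+2$ has exactly $2g+2$ non-intertwined $3$-nodes, and \emph{opening} such a node produces, in the connected case, precisely a labelled three-face map of genus $g$ with one marked vertex per face, all three at the same label. This links the unknown three-face count to the known quantity $L_{g+2}(z)$; subtracting the contributions where the opening disconnects the map (expressible through $L_h$ for $h\le g+1$) then yields the dominant singularity of the three-face generating function in terms of $\tau_{g+2}$ and lower $\tau$'s.

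The constant that emerges is genus-dependent and complicated. The step that collapses it to $1/60$ is a specific differential consequence of the $t_g$-recurrence (Lemma~\ref{lemma:eliminate}): one differentiates~\eqref{eq:tgdiff} several times, eliminates the higher derivatives of $U(s)=\sum_{g\ge1}\tau_g s^g$, and obtains an identity expressing $(5\tfrac{sd}{ds}-3)_{((5))}(s^2U)$ in terms of $6U^2-2U^3$ and lower-order terms. This is exactly the ``integrable-systems input'' you anticipated needing, but it is an algebraic elimination internal to the $t_g$-recurrence, not a combinatorial recursion passing through the two-source case.

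So the concrete gaps are: (1) no device for computing the three-face singularity --- the trisection lemma is the missing combinatorial input; and (2) no candidate identity to replace the phrase ``this should force $c_g/t_g$ to be genus-independent'' --- the required identity is Lemma~\ref{lemma:eliminate}, and your proposed route through two-face objects does not lead to it.
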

As we will see, the fact that this moment is computable reflects the existence of a combinatorial device  known as the ``trisection lemma''~\cite{Chapuy:trisections}. The fact that it does not depend on the genus, and that it coincides\footnote{if $U_1,U_2$ are two independent uniforms on $[0,1]$ and $I_1, I_2, I_3$ are the lengths of the three intervals they define,
then %
$\mathbf{E} (I_1I_2I_3)$ is the probability that five independent uniforms $U_1,U_2,V_1,V_2,V_3$ are ordered as $V_1<U_{1}\wedge U_2<V_2<U_1\vee U_{2}<V_3$, which is clearly equal to $\frac{2}{5!}=\tfrac{1}{60}$.} with the corresponding moment for a uniform three-division of the interval $[0,1]$, is as mysterious as for the previous result (or even more, since as we will see the computations leading to Theorem~\ref{thm:3points} are quite delicate and involve intermediate expressions that are complicated and magically become simpler at the last minute).

 We won't prove anything on higher moments or other values of~$k$ since we lack the tools to study them. However, numerical simulations suggest that the first joint moments of the random variables $(\YY_g^{(i:k)})_{1\leq i \leq k}$, for small values of $g$ and $k$, are close to what they are for a uniform partition of $[0,1]$ into $k$ intervals, {\it i.e.}  a Dirichlet$(1,1,\dots,1)$ random variable.
Theorems~\ref{thm:obs1} and~\ref{thm:3points} support this conjecture,  so we dare to state it explicitly:
\begin{conjecture}\label{conjecture}
	For $k\geq 2, g\geq 0$, let $\qginf \equiv (\qginf, d_\infty, \mu_\infty)$ be a genus $g$ Brownian surface and let $\vv_1,\dots,\vv_k$ be $k$ i.i.d. points sampled according to the distribution $\mu_\infty$. Then the random vector $(\YY_g^{(1:k)},\YY_g^{(2:k)},\dots, \YY_g^{(k:k)})$ defined by \eqref{eq:defYY} has the same law as the subdivision of the unit interval induced by $k-1$ independent uniform variables, {\it i.e.} a Dirichlet random variable of parameters $(1,1,\dots,1)$. In particular, for any $g\geq 0$, $\XX_g=\YY_g^{(1:2)}$ is uniform on $[0,1]$.
\end{conjecture}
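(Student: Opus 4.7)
Since the coordinate vector $(\YY_g^{(1:k)},\dots,\YY_g^{(k:k)})$ lies in the bounded simplex $\{x_i\geq 0,\ \sum_i x_i=1\}$, its law is determined by its moments. The plan is therefore to compute, for every multi-index of non-negative integers $(m_1,\dots,m_k)$, the joint moment
\[
\mathbf{E}\bigl[\textstyle\prod_{i=1}^k (\YY_g^{(i:k)})^{m_i}\bigr],
\]
and to verify that it equals the corresponding moment
\[
\frac{(k-1)!\,\prod_{i=1}^k m_i!}{(k-1+m_1+\cdots+m_k)!}
\]
of the Dirichlet$(1,\dots,1)$ distribution on the simplex. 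Theorems~\ref{thm:obs1} and~\ref{thm:3points} already validate this formula in the cases $(k,(m_i))=(2,(1,1))$ and $(3,(1,1,1))$, so the whole content of the conjecture is a universal version of those two results.

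The computation itself should follow the pattern of the present paper. To extract the moment above, I would sample in addition to the $k$ sources $\vv_1,\dots,\vv_k$ an independent family of $m_i$ extra points $\ww_{i,1},\dots,\ww_{i,m_i}$ according to $\mu_\infty$, for each $i \in [1..k]$. The target moment is then the probability that $\ww_{i,j}$ lies in the Voronoi cell of $\vv_i$ for every $i,j$, and at the discrete level this is a ratio of enumerations of quadrangulations in $\QQgn$ with $k+m_1+\cdots+m_k$ marked vertices constrained by the nearest-neighbour condition. A Miermont-type tessellation with $k$ sources would decompose such a quadrangulation into $k$ labeled slices meeting along a network of geodesic arcs, tracking each extra marked vertex inside its prescribed slice, and one would then appeal to the generating-function machinery underlying the constants $t_g$ to extract the asymptotics.

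The main obstacle lies in the combinatorics of the decorated slices once they carry several extra vertices. In the cases treated in the paper, at most one additional mark sits in each slice, and the enumeration reduces to building blocks that are already well understood (pointed slices for Theorem~\ref{thm:obs1}, the trisection device for Theorem~\ref{thm:3points}); a small combinatorial miracle then produces a genus-independent constant matching the Dirichlet moment. For arbitrary $(m_i)$ the intermediate expressions will presumably be intricate, and the hard part is to show that they collapse to the clean Dirichlet form. The universality of the answer across all $g$ and $(m_i)$ strongly suggests that the right route is \emph{not} brute enumeration but the identification of a hidden exchangeability or re-rooting symmetry of the Brownian map that directly couples the Voronoi masses to a uniform subdivision of $[0,1]$, possibly by re-sampling the positions of the sources and the extras within a common random geometric structure. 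The fact that the conjecture is open even for $(g,k)=(0,2)$ --- where the moment machinery is as simple as it can be --- is a good indication that unveiling such a symmetry, rather than any particular computation, is where the essential difficulty lies.
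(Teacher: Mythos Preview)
This statement is a \emph{conjecture} in the paper, not a theorem: the paper offers no proof, and explicitly leaves it open even for $(g,k)=(0,2)$. So there is nothing to compare your proposal against.

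Your proposal, in turn, is not a proof but a research outline. The moment strategy you describe is sound in principle (a law on the simplex is determined by its mixed moments, and the target Dirichlet moments are correct), but you never actually compute any moment beyond the two the paper already establishes. You correctly identify the obstruction: for general $k$ and general exponents $(m_1,\dots,m_k)$ there is no known combinatorial or analytic mechanism that collapses the slice enumeration to the Dirichlet value, and the paper's two computed cases each rely on a special device (the root-edge decomposition for $k=2$, the trisection lemma for $k=3$) that does not extend. Your closing paragraph essentially concedes this, pointing instead to a hypothetical symmetry argument that neither you nor the paper supplies. In short, what you have written is an accurate diagnosis of why the conjecture is open, not a proof of it; the paper's position is the same.
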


\smallskip

To conclude this introduction, we emphasize that our main contribution relates the moment $\mathbf{E}\XX_g^2$ to the $g$-th step of the $t_g$-recurrence. In particular, the fact that  $\mathbf{E}\XX_0^2=1/3$ for the \emph{genus~$0$} Brownian map is only ``equivalent'' to the computation of the genus $1$ constant $t_1$, that can be performed by hand in several ways (and similarly, our proof of Theorem~\ref{thm:3points} for $g=0$ relies only on the value of the constants $t_1$ and $t_2$).
However, proving Conjecture~\ref{conjecture} even for $(g,k)=(0,2)$ would be interesting in itself. Readers familiar with Miermont's bijection~\cite{Miermont:tessellations} may try to approach this problem by exact counting of well-labelled 2-face maps (we have failed trying to do so\footnote{after a first version of this paper was made public, Emmanuel Guitter was able to apply this idea, and checked by a ``semi-rigourous'' calculation that for $(g,k)=(0,2)$ the law is indeed uniform~\cite{Guitter:voronoiBrownian}. Guitter's remarkable calculation is computer assisted and very heavy, moreover fully justifying all of the needed approximations seems technically difficult -- yet it strongly supports our conjecture. This method seems unfit to apply to \emph{general} values of $k$ (or $g$), and probably too heavy to apply to \emph{any other} value of $(g,k)$.}). One could also hope that in the future purely probabilistic methods (for example using the QLE viewpoint on the Brownian map~\cite{MillerSheffield}) will enable to determine the full law of~$\XX_0$ or even the law of the vector $(\YY_0^{(i:k)})_{1\leq i\leq k}$ for each $k$. 
In an opposite direction, we recall that the $t_g$-recurrence is only a ``shadow'' of the fact that the generating functions of maps satisfy a set of infinitely many partial differential equations called the KP hierarchy.
 It is natural to expect that other joint moments of the variables $\YY_g^{(i:k)}$, apart from the two cases we have been able to track, are related to these equations. This may lead to a way, based on integrable hierarchies, of approaching Conjecture~\ref{conjecture}.

\section{Proof of Theorem~\ref{thm:obs1}}

\subsection{Classical tools and notation}
\label{sec:prel1}

In this section we recall some classical ingredients from the toolbox of map enumeration, and we introduce some notation.

\noindent{\bf Asymptotic counting of maps by genus.}
For $g\geq 0$ we let $Q_g(z)$ be the generating function of rooted bipartite quadrangulations of genus $g$ by the number of faces, and we let $Q_g^\bullet(z)$ be the g.f. of the same objects where an additional vertex is pointed. We let $m_g(n) = [z^n] Q_g(z)$ and we use the same notation with $~^\bullet$. In what follows the notation $a(n)\sim b(n)$ means (classically) that $a(n)/b(n)\rightarrow 1$ when $n$ tends to infinity, while the notation $F(z)\sim G(z) $ means that both $F$ and $G$ have a unique dominant singularity at $z=\tfrac{1}{12}$, that both have a convergent Puiseux expansion in a neighbourhood of this point slit along the line $[\frac{1}{12},\infty)$, and that the  first singular term in this expansion is the same for both.

From \cite{BC0} (see also~\cite{CMS} for purely combinatorial proofs) we have for fixed $g\geq 0$:
$$
m_g(n) \sim t_g n^{\frac{5g-5}{2}} 12^n
\ \ , \ \ 
m^\bullet_g(n) 
= (n+2-2g) m_g(n) \sim t_g n^{\frac{5g-3}{2}} 12^n
$$
\begin{eqnarray}\label{eq:singQg}
Q^\bullet_g(z) \sim \Gamma(\tfrac{5g-1}{2}) t_g (1-12z)^{\frac{1-5g}{2}}
=
2^{2-5g}\tau_{g}
(1-12z)^{\frac{1-5g}{2}}
.
\end{eqnarray}

\begin{figure}
\begin{center}
\includegraphics[width=0.4\linewidth]{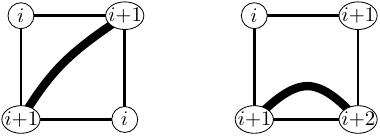}
	\caption{{\bf Schaeffer's rules.} The possible label variations around a face, up to rotation, are $(i,i+1,i,i+1)$ and $(i,i+1,i+2,i+1)$ where $i$ is the minimal label around the face. The Schaeffer rules indicate how to add a new edge (in bold) inside each face.
  }\label{fig:schaeffer}
\end{center}
\end{figure}

\noindent{\bf Miermont's bijection.}
We use $V(M), E(M)$ to denote respectively the vertex set and edge set of a map $M$.
A \emph{labelled map} of genus $g$ is a rooted map $M$ of genus $g$ equipped with a fonction $\ell: V(M)\rightarrow \mathbb{Z}$ such that for any edge $(u,v)$ of $M$ one has $\ell(u)-\ell(v)\in \{-1,0,1\}$. We consider these objects up to global translation of the labels.
A \emph{labelled one-face map (l.1.f.m.)} is a labelled map having only one face. We let $\mathcal{L}_n^{(g)}$ be the set of all (rooted) l.1.f.m. of genus $g$ with $n$ edges.

Miermont's bijection~\cite{Miermont:tessellations} plays an important role in this work. We briefly recall it for completeness, without proofs. 
For $k\geq 1$, a \emph{$k$-pointed bipartite quadrangulation} is a tuple 
$$(Q; s_1, s_2, \dots, s_k; \delta_1,\delta_2,\dots,\delta_k)$$
where $Q$ is a rooted bipartite quadrangulation,  $s_1, s_2, \dots, s_k$ are $k$ distinct vertices of $Q$ and where $\delta_1,\delta_2, \dots, \delta_k$ are integers, considered up to common translation. For the construction to be well defined, one needs the following two properties to hold
\begin{align}\label{eq:miermont1}
	\forall i,j, \ \ d(s_i,s_j) \equiv \delta_i - \delta_j \ \mod \ 2,
\\\label{eq:miermont2}
	\forall i\neq j \ \ d(s_i,s_j) > | \delta_i -\delta_j|,
\end{align}
where $d$ is the graph distance in $Q$.
The bijection of~\cite{Miermont:tessellations} proceeds as follows. We start by labelling each vertex $v$ of $Q$ by the number $\ell(v)$ defined by:
\begin{eqnarray}\label{eq:labelmiermont}
\ell(v) := \min_{i\in[1..k]} d(v,s_i)+\delta_i.
\end{eqnarray}
Bipartiteness and the hypotheses~\eqref{eq:miermont1}-\eqref{eq:miermont2} ensure that, up to rotation, the only  label sequences that can appear around a face are $(i,i+1,i,i+1)$ and $(i,i+1,i+2,i+1)$ for $i \in \mathbb{Z}$, see Figure~\ref{fig:schaeffer}. One then applies the ``Schaeffer rules'', that consist in adding a new edge inside each face of $Q$ according to the rules depicted in Figure~\ref{fig:schaeffer}. The labelled map $L$ associated to $Q$ is the labelled map consisting of all the new edges, and all the vertices of $Q$ different from the $k$ pointed vertices, with the labelling given by~\eqref{eq:labelmiermont}.

\begin{prop}[Miermont's bijection]\label{prop:bij}
	For each $g\geq 0$, $k\geq 1$, the preceding construction is a $1$ to $2$ mapping between $k$-pointed bipartite quadrangulations of genus $g$ satisfying~\eqref{eq:miermont1}-\eqref{eq:miermont2}, and labelled $k$-face maps of genus $g$, with faces numbered from $1$ to $k$. %
	Moreover, if $\ell_i$ is the minimal label in the $i$th face of the labelled map in some fixed labelling, then one has $\ell_i-\delta_i=\ell_j-\delta_j$ for all $1\leq i,j \leq k$.
\end{prop}

The converse bijection works as follows. Given a labelled $k$-face map $L$, let $\ell_i$ denote the minimum label in the $i$-th face for $1\leq i\leq k$, and add a new vertex of label $\ell_i-1$ inside that face (call it $s_i$). Then apply the \emph{closure} operation: join each corner of $L$ to the first corner with a strictly smaller label, counterclockwise around the face it belongs to (or join it to the newly added vertex in that face if no such corner exists). The map $Q$ formed by all the newly added edges, equipped with the $k$ marked points $s_1,\dots,s_k$  is a $k$-pointed quadrangulation, whose delays can be recovered up to translation by the last equality in Proposition~\ref{prop:bij}. The factor of $2$ comes from the necessary choice of a rooting convention, since $Q$ has twice as many edges as $L$. 
This description of the converse bijection makes clear that each corner $c$ of $L$ is canonically associated to an edge $e$ of $Q$. Since each corner belongs to a unique face, the $k$ faces of the map $L$ thus induce a partition of the edges of $Q$ into $k$ parts, $E(Q)=\cup_{a=1}^k E_a$ where $E_a$ is the set of edges of $Q$ lying in the $a$-th face of $L$. This partition can, roughly speaking, be understood as a Vorono\"i-like tessellation of the map $Q$, with respect to the ``delayed'' distance~\eqref{eq:labelmiermont}.

To make this statement precise, we proceed as in~\cite[Section 2.2]{Miermont:tessellations}. After labelling the vertices of $Q$ by~\eqref{eq:labelmiermont}, we orient each edge of $Q$ towards its endpoint of smaller label. From each edge $e$, the \emph{leftmost geodesic path starting from~$e$} is the oriented path that starts with $e$, and that when arriving at a new vertex continues with the leftmost available oriented edge around this vertex if there is such an edge, and stops otherwise. This path necessarily stops when it reaches one of the vertices $s_1,\dots,s_k$, and the converse bijection shows that for each $a \leq k$  the set of edges of $Q$ whose leftmost geodesic path ends at~$s_a$ is precisely the set $E_a$ defined above. For $a \leq k$, we also let $V_a$ (respectively, $V^{\neq}_a$) be the set of vertices of $Q$ where the minimum in~\eqref{eq:labelmiermont} is reached for $i=a$ (respectively, \emph{only} for $i=a$). 
We have~\cite[last eq. of Sec. 2.2]{Miermont:tessellations},
\begin{eqnarray}\label{eq:inclusionCells}
	V^{\neq}_a
	\subseteq\{s_a\} \cup \{e^-, e\in E_a\} 
	\subseteq
	V_a
\end{eqnarray}
where $e^-$ is the endpoint of $e$ with smaller label. This property makes the link between nearest-neighbour tessellations and labelled maps, and it will be crucial for us.

\smallskip
\noindent{\bf The Marcus-Schaeffer bijection.}
The Marcus-Schaeffer bijection (\cite{MS}, see also~\cite{CMS} for the version needed here) is the case $k=1$ of the Miermont bijection.
It is therefore a bijection:
\begin{eqnarray}\label{eq:MSbij}
\mathcal{Q}_n^{(g)\bullet} \longrightarrow \{\uparrow,\downarrow\} \times \mathcal{L}_n^{(g)},
\end{eqnarray}
where $\mathcal{Q}_n^{(g)\bullet}$ is the set of rooted bipartite quadrangulations of genus $g$ and $n$ faces  equipped with a pointed vertex.
It follows that $Q^\bullet_g(z)=2L_g(z)$ where $L_g(z)$ is the generating function  of rooted l.1.f.m. of genus $g$ by the number of edges.
Moreover, in genus $0$, rooted one-face maps are nothing but rooted plane trees, and a standard root-edge decomposition leads to the quadratic equation
$
L_0(z)= 1+3zL_0(z)^2,
$
from which we get the explicit formula:
\begin{eqnarray}\label{eq:valKernel}
 1-6zL_0(z) = \sqrt{1-12z}.
\end{eqnarray}

\subsection{The decomposition equation, Miermont's bijection, and proof of Theorem~\ref{thm:obs1}}

\begin{figure}
\begin{center}
\includegraphics[width=\linewidth]{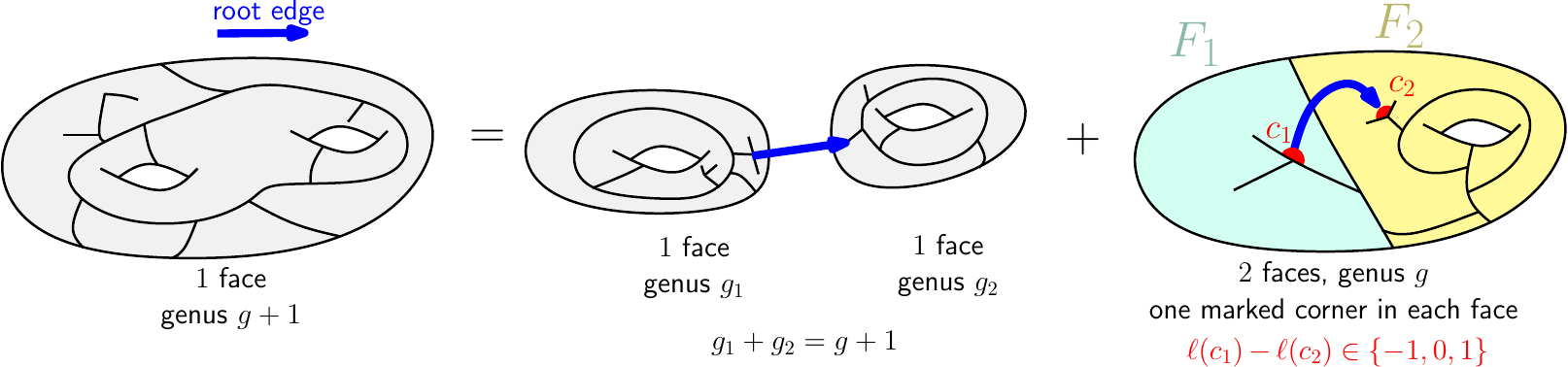}
\caption{Illustration of the decomposition leading to Equation~\eqref{eq:Tutte}}\label{fig:Tutte}
\end{center}
\end{figure}

We now come to the substance of this paper, which in concrete terms is to try to write an equation for the generating function of l.1.f.m. by root-edge decomposition, and see what happens. 

We fix $g\geq 0$, and we consider a l.1.f.m. $M$ of genus $g+1$. If we remove the root edge of this map, two things can happen (see Figure~\ref{fig:Tutte}):
\begin{itemize}[itemsep=0pt, topsep=0pt,parsep=0pt, leftmargin=20pt]
\item[(i)] we disconnect the map into two l.1.f.m. $M_1$ and $M_2$ whose genera sum up to $g+1$;
\item[(ii)] we do not disconnect the map; in this case we are left with a map $M'$ of genus $g$ with two faces. Each face of $M'$ carries a distinguished corner, and the labels of these two corners differ by $-1$, $0$, or $1$.
\end{itemize}

\noindent 
Translating this operation  into an equation for generating functions we obtain
\begin{eqnarray}\label{eq:Tutte}
L_{g+1}(z) 
= 
3z\sum_{g_1+g_2=g+1\atop g_1,g_2\geq 0}
L_{g_1}(z) 
L_{g_2}(z) 
+
z A_g(z)
\end{eqnarray}
where:
\begin{itemize}[itemsep=0pt, topsep=0pt, parsep=0pt, leftmargin=20pt]
\item[-]
 in the first term the factor of $3$ takes into account the choice of the increment of label along the root-edge in $\{-1,0,1\}$;
\item[-]  $A_g(z)$ is the generating function by the number of edges, of unrooted labelled two-face maps of genus $g$, with faces numbered $F_1,F_2$, such that the face $F_i$ contains a marked corner $c_i$ for $i\in\{1,2\}$, and that $|\ell(c_1)-\ell(c_2)|\leq 1$.
\end{itemize}

\noindent 
Objects counted by $A_g(z)$ are related to quadrangulations thanks to Miermont's bijection.
In the following discussion, %
we will show  how to use this observation to arrive informally at Lemma~\ref{lemma:convergenceCell} below, and why this implies Theorem~\ref{thm:obs1}. Details of the proof of Lemma~\ref{lemma:convergenceCell} are postponed to the next sections.

Let us consider an object counted by $[z^n] A_g(z)$. Let us fix the translation class of the labels by saying that the minimum label in face $F_1$ is zero, and let us call $\delta$ the minimum label in face $F_2$. Let $i_1\geq 0$ and $i_2\geq \delta$ be the labels of the two marked corners $c_1$ and $c_2$,  respectively, and recall that $i_1-i_2\in \{-1,0,1\}$.
 Miermont's bijection (Proposition~\ref{prop:bij} with $k=2$) puts this object in correspondence with a bipartite quadrangulation $Q$ of genus $g$ with two distinct marked vertices $s_1, s_2$ such that the quantity $d(s_1,s_2)+\delta$ is even, where $d$ is the graph distance in $Q$. Moreover,  the two corners $c_1$ and $c_2$ of the original two-face map are naturally associated to two edges $e_1$ and $e_2$ of the quadrangulation, and the second inclusion in \eqref{eq:inclusionCells}~ensures that  if $m_i$ is the endpoint of $e_i$ closer from $s_i$ in $Q$ for $i\in \{1,2\}$, one has: 
$$
d(s_1,m_1)=i_1 \ \ ,\ \ d(s_2,m_2)=i_2-\delta \ \ , \ \ 
d(s_2,m_1)\geq i_1-\delta \ \ ,\ \ d(s_1,m_2)\geq i_2.$$
These constraints imply 
\begin{align}\label{eq:crossed}
d(s_1,m_1) \leq d(s_1,m_2) -\epsilon\ \ , \ \ 
d(s_2,m_2) \leq d(s_2,m_1) +\epsilon,
\end{align}
where $\epsilon = i_2-i_1$.  Note that $|\epsilon|\leq 1$,
so loosely speaking the properties in~\eqref{eq:crossed} imply that, \emph{up to an error at most 1}, $s_i$ is (weakly) closer to $m_i$ than to $m_{3-i}$ for $i\in\{1,2\}$. 
 Unfortunately these constraints do not entirely characterize these objects (see next section) but they do, in some sense, asymptotically. Thinking heuristically for a moment, we can expect that the analogue in the continuum limit of these discrete configurations is a Brownian surface with four marked points $(m_1^\infty, m_2^\infty, s_1^\infty, s_2^\infty)$ such that if we subdivide the space in two nearest-neighbour cells induced by  $m_1^\infty$ and $m_2^\infty$, the point $s_i^\infty$ belongs to the nearest-neighbour cell induced by $m_i^\infty$ for each $i\in\{1,2\}$.
Up to technical details that we will carry out in the next section, this leads us quite naturally to the following conclusion:
\begin{lemma}\label{lemma:convergenceCell}
The coefficient $[z^n]A_g(z)$ is such that, as $n$ goes to $\infty$:
\begin{eqnarray}\label{eq:convergenceCell}
\frac{[z^n]A_g(z)}{3/2 \cdot n^3m_{g}(n)} \longrightarrow \mathbf{E}[\XX_g(1-\XX_g)]
\end{eqnarray}
with the notation of Theorem~\ref{thm:obs1}.
\end{lemma}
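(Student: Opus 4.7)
The plan is to unfold Miermont's bijection with $K=2$ sources to rewrite $[z^n]A_g(z)$ as a count over decorated bipartite quadrangulations, and then pass to the Brownian-map scaling limit. I would first formalize the correspondence that identifies each object counted by $A_g(z)$ with a tuple $(Q, s_1, s_2, e_1, e_2, \delta)$ (together with a bounded discrete bit for orientation and parity) where $Q \in \QQgn$, $s_i \in V(Q)$ is the vertex added in face $F_i$ of the source two-face map, $e_i$ is the edge of $Q$ produced from the marked corner $c_i$, $m_i$ is the endpoint of $e_i$ closer to $s_i$ in $Q$, and $\delta \in \mathbb{Z}$ records the shift between the translation classes of the labels in the two faces. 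The constraints (\ref{eq:crossed}) and $\epsilon = i_2-i_1 \in \{-1,0,1\}$ then translate into explicit discrete conditions on $(\delta, \epsilon)$, via the identities $i_1 = d(s_1,m_1)$ and $i_2 - \delta = d(s_2,m_2)$ recorded in the excerpt.

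Next I would fix $(Q, s_1, s_2, m_1, m_2)$ and count admissible pairs $(\delta, \epsilon)$. The condition $|\epsilon|\leq 1$ pins $\delta$ to a three-element range, and both inequalities of (\ref{eq:crossed}) are simultaneously satisfied for all three values of $\delta$ precisely when $s_i$ lies strictly inside the Voronoi cell of $m_i$ in the nearest-neighbour tessellation of $V(Q)$ induced by $\{m_1,m_2\}$; tuples on the Voronoi boundary yield strictly fewer admissible $\delta$ and should contribute only to a lower-order remainder. Using $|V(Q)|=n+2-2g$, $|E(Q)|=2n$ and the standard rooting bookkeeping of Miermont's bijection, this collects into
$$
[z^n]A_g(z) \;=\; \tfrac{3}{2}\,n^3\, m_g(n)\cdot \mathbf{P}_n\!\bigl(s_i \in \mathrm{Cell}(m_i),\ i=1,2\bigr) \;+\; o\!\bigl(n^3 m_g(n)\bigr),
$$
where under $\mathbf{P}_n$ the quadrangulation $Q$ is uniform in $\QQgn$ and the four marked points $(s_1,s_2,m_1,m_2)$ are uniform on $V(Q)$, up to a bounded degree-bias on $m_i$ coming from the choice of $e_i$, irrelevant in the scaling limit.

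To conclude I would invoke the GHP subsequential convergence of $(\qgn, n^{-1/4}d_n, \mu_n)$ to $(\qginf, d_\infty, \mu_\infty)$; combined with the standard joint convergence of four essentially uniform vertices of $\qgn$ to four independent $\mu_\infty$-distributed points $(\vv_1', \vv_2', \vv_1, \vv_2)$ in $\qginf$, the discrete probability above converges to $\mathbf{P}(\vv_i' \in \mathrm{Cell}(\vv_i),\ i=1,2) = \mathbf{E}[\XX_g(1-\XX_g)]$, the last equality following by conditioning on $(\vv_1,\vv_2)$, the independence of $\vv_1',\vv_2'$, and the definition of $\XX_g$. The main obstacle is the control of the $\pm 1$ slack in (\ref{eq:crossed}): since typical distances in $\qgn$ are of order $n^{1/4}$, this slack vanishes in the continuum limit, but rigorously one must show both that the proportion of discrete configurations for which one of the inequalities in (\ref{eq:crossed}) is tight up to $O(1)$ tends to zero, and that the scaling-limit Voronoi boundary has $\mu_\infty$-measure zero. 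Both assertions should follow from absolute-continuity properties of the distance profile of the Brownian map, accessible via coalescence-of-geodesics techniques as in~\cite{Miermont:tessellations}.
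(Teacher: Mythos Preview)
Your high-level strategy---Miermont's bijection, then GHP limit, then show the Voronoi boundary is negligible---is exactly the paper's. But there is a genuine gap in the first step. You write that the objects counted by $A_g(z)$ correspond, under the bijection, to tuples $(Q,s_1,s_2,e_1,e_2,\delta)$ satisfying the distance inequalities~\eqref{eq:crossed} together with $|\epsilon|\le 1$ and a parity bit. This is not true: the inequalities~\eqref{eq:crossed} are only \emph{necessary}. The full characterization coming out of Miermont's bijection (the paper's Lemma~\ref{lemma:MiermontConstraints}) is that the \emph{leftmost oriented geodesic} from $e_i$ in the Miermont labelling must end at $s_i$, a topological/metric condition (M3) that is strictly stronger than~\eqref{eq:crossed}. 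The paper warns about this explicitly (``Unfortunately these constraints do not entirely characterize these objects'') and then spends Lemma~\ref{lemma:GeoToVor} proving that (M3) can be replaced, up to $o(n^3 m_g(n))$, by a clean strict-Voronoi condition (M$'$3). Without that step your count would not match $[z^n]A_g(z)$.

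Two smaller points. First, your treatment of the parity/$\delta$ bookkeeping is imprecise: given $(Q,s_1,s_2,e_1,e_2)$ the pair $(\delta,\epsilon)$ is not free but linked by $\delta=d(s_1,m_1)+\epsilon-d(s_2,m_2)$, and the admissibility constraint $d(s_1,s_2)+\delta$ even kills roughly half the choices. The paper handles this by splitting $A_g=\sum_{\epsilon} A_g^\epsilon$, proving (Lemma~\ref{lemma:equalContrib}) that each $\epsilon$ contributes $\sim\tfrac13$, and then summing only over $\epsilon\in\{0,1\}$ so that the parity condition becomes vacuous; this is what actually produces the factor $3/2$ in the denominator. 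Second, for the boundary-negligibility step (both the discrete $O(1)$ slack and the continuum $\mu_\infty$-measure-zero statement), the paper does not use coalescence of geodesics but rather the fact that the one-point distance profile $\eta_g$ has no atoms (Lemma~\ref{lemma:zeroProba}), which follows from the known non-atomicity of ISE via~\cite{Chapuy:trisections}. Your proposed route through absolute continuity might work, but it is not the argument the paper gives and would require more.
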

\begin{rem}\label{rem:1} The reader can understand heuristically the meaning of the denominator $3/2 \cdot n^3 m_{g}(n)$ as follows. The tuple $(Q,s_1,s_2,e_1,e_2)$ is a quadrangulation with two marked vertices and two marked edges. We can use $e_1$ as the root-edge of $Q$, and orient it by deciding that its source is at even distance from~$s_1$. We can choose the ``error'' $\epsilon$ freely in $\{-1,0,1\}$ (since asymptotically we do not expect this error to play any role), and set $i_1:=d(s_1,m_1)$ and $\delta:=i_1+\epsilon - d(s_2,m_2)$. Since Miermont's bijection requires that $d(s_1,s_2)+\delta$ is even, we are left with a rooted quadrangulation with one marked edge~$e_2$, and two marked vertices $(s_1,s_2)$ subject to \emph{two} parity constraints (that $s_1$ is at even distance from the root, and that $d(s_1,s_2)+\delta$ is even). Since a quadrangulation with $n$ faces has $2n$ edges and $n+2-2g$ vertices, and since it is natural to expect each parity constraint to contribute an asymptotic factor $\frac{1}{2}$, the total number of ``base configurations'' we obtain is $\sim 3\times (2n) n^2/4 \cdot m_g(n)$, hence the denominator in~\eqref{eq:convergenceCell}. %
\end{rem}
\medskip

Admitting Lemma~\ref{lemma:convergenceCell} (to be proved in Section~\ref{sec:technical}) we can now conclude the proof of Theorem~\ref{thm:obs1}. First, we can rewrite the decomposition equation~\eqref{eq:Tutte} as:
\begin{eqnarray}\label{eq:TutteKernel}
(1-6zL_0(z)) L_{g+1}(z) 
- 
3z\sum_{g_1+g_2=g+1,\atop g_1,g_2>0}
L_{g_1}(z) 
L_{g_2}(z) 
=
z A_g(z),
\end{eqnarray}
which expresses the generating function $L_{g+1}(z)$ in terms of the lower genus functions $L_i(z)$ for $i\leq g$, and of the ``unknown'' quantity $A_g(z)$.
We recall that $Q^\bullet_g(z)=2L_g(z)$ and~\eqref{eq:singQg}, from which we observe that each term in the L.H.S. of~\eqref{eq:TutteKernel} has a dominant singularity at $z=\tfrac{1}{12}$ with the same order of magnitude. More precisely, for the first term, using~\eqref{eq:valKernel}, we obtain $(1-6zL_0(z))L_{g+1}(z)\sim 2^{1-5(g+1)}\tau_{g+1}(1-12z)^{1-\frac{5}{2}(g+1)}$.
For product terms we have 
$L_{g_1}(z) L_{g_2}(z)\sim
2^{2-5(g+1)}\tau_{g_1}\tau_{g_2}(1-12z)^{1-\frac{5}{2}(g_1+g_2)}$.
It follows, using standard transfer theorems for algebraic functions~\cite{Flajolet} that when $n$ goes to infinity:
\begin{eqnarray}
[z^{n-1}] A_g(z) \sim 12^n n^{\frac{5g+1}{2}} 
2^{1-5(g+1)}\Gamma\left(\tfrac{5g+3}{2}\right)^{-1}
\left(\tau_{g+1} - \tfrac{1}{2}\sum_{g_1+g_2=g+1\atop g_1,g_2>0} \tau_{g_1}\tau_{g_2} \right).
\end{eqnarray}
But from Lemma~\ref{lemma:convergenceCell}, we have another expansion of the ``unknown'' coefficient $[z^{n-1}] A_g(z)$, namely:
$$
[z^{n-1}] A_g(z) \sim\mathbf{E}\XX_g(1-\XX_g) \cdot 3/2\cdot n^3 m_{g}(n-1) 
\sim\mathbf{E}\XX_g(1-\XX_g)\cdot  3\cdot 2^{1-5g}\Gamma(\tfrac{5g-1}{2})^{-1} \tau_g n^{\frac{5g+1}{2}} 12^{n-1}.
$$
Theorem~\ref{thm:obs1} follows by comparing the last two expansions of the ``unknown'' quantity $[z^{n-1}]A_g(z)$ (we recall that $\Gamma(\frac{5g+3}{2})/\Gamma(\tfrac{5g-1}{2})=\frac{(5g+1)(5g-1)}{4}$).

\subsection{Remaining proofs, I: general properties}
Because we will need to pick both edges and vertices at random, we first need a lemma that compares both:
\begin{lemma}\label{lemma:vertexvsedge}
Given a quadrangulation $Q$ of genus $g$ with $n$ faces, there exists a probability measure $\mu_n^E$ on edges of $Q$ and a mapping $\phi:E(Q)\rightarrow V(Q)$ that associates to each edge of $E$ a vertex at distance at most one of one of its endpoints, such that the distance in total variation between $\mu_n^{E}$ and the uniform measure on edges, and between $\phi \circ \mu_n^{E}$ and the uniform measure on vertices, are both $O(\tfrac{1}{n})$.
\end{lemma}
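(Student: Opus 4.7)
The plan is to build an explicit map $\phi:E(Q)\to V(Q)$ whose fibers have size $2$ on all but a bounded number of exceptional vertices, and then take $\mu_n^E$ to be the uniform measure on $E(Q)$; the total variation bounds will then reduce to an elementary computation.

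By Euler's relation a bipartite quadrangulation of genus $g$ with $n$ faces has $|V(Q)| = n+2-2g$ and $|E(Q)| = 2n$, so $|E|/|V| = 2 + O(1/n)$. A map $\phi:E\to V$ with fibers of size exactly $2$ on all but $|4g-4|$ vertices is therefore the natural target: the identity $\sum_v|\phi^{-1}(v)| = 2n = 2|V|+(4g-4)$ forces exactly $|4g-4|$ exceptional vertices whose fiber has size $1$ (when $g=0$) or $3$ (when $g\geq 2$), the case $g=1$ being exactly balanced.

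To construct $\phi$, I would first fix a spanning tree $T$ of $Q$ rooted at an arbitrary vertex $\rho$ and define $\phi$ on each tree edge as its endpoint farther from $\rho$. This is a bijection $E(T)\to V(Q)\setminus\{\rho\}$, giving every non-root vertex an initial fiber of size $1$. There remain $n-1+2g$ non-tree edges to distribute via a Hall-type matching argument applied to the bipartite graph in which each non-tree edge is connected to the vertices at distance $\leq 1$ from one of its endpoints. The relaxation from endpoints to \emph{distance at most $1$} is precisely what allows Hall's condition to be satisfied in positive genus, since otherwise (for $g\geq 1$) the non-tree edges would be too numerous to be injected into the set of endpoints.

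Once $\phi$ has been built, set $\mu_n^E$ to be uniform on $E(Q)$, so that $d_{TV}(\mu_n^E,\mathrm{Unif}(E))=0$. The pushforward $\phi_\ast\mu_n^E$ places mass $|\phi^{-1}(v)|/(2n)$ on each vertex, equal to $1/n$ on non-exceptional vertices. Comparing with the uniform measure $1/(n+2-2g)$ on $V$, one obtains a per-vertex error of $O(1/n^2)$ on the bulk of $n+O(1)$ vertices, and at most $O(1/n)$ on each of the $O(g)$ exceptions, summing to a total variation $O(1/n)$ as claimed.

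The main technical obstacle will be the Hall-style argument for the non-tree edge assignment. Concretely one needs to show that any set of $k$ non-tree edges has at least $k-O(g)$ distinct candidate vertices within distance $1$ of one of its endpoints. This should follow from an Euler-type estimate: if too many non-tree edges were concentrated on too few vertices, the induced subgraph would exceed the edge density permitted by its Euler genus. Making this local-to-global argument precise, and checking that the resulting exceptional vertices can be fit into a fiber-$1$ or fiber-$3$ pattern according to the sign of $4g-4$, is the only genuine technicality in the proof.
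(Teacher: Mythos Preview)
Your approach is different from the paper's, and as you yourself acknowledge, the key step---the Hall-type matching---is not carried out. This is a genuine gap, not a routine technicality, and two issues make it non-obvious to fill. First, for $g\ge 1$ there are $n-1+2g$ non-tree edges but only $n+2-2g$ vertices, so no injection from non-tree edges to vertices exists; what you actually need is a map whose fibers equal $1$ on all but $O(g)$ vertices, which amounts to a Hall condition on the \emph{vertex} side of the bipartite incidence graph, and you have neither stated nor verified this. Second, maps allow multiple edges, so the ``Euler-type'' edge-density heuristic does not apply without further argument: a bundle of many parallel non-tree edges between two vertices $u,v$ has only $\{u,v\}\cup N(u)\cup N(v)$ as candidate targets, and nothing in your outline controls the size of such neighbourhoods.

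The paper's proof bypasses these difficulties completely by passing through the Marcus--Schaeffer bijection rather than working directly on $Q$. Choose a pointed vertex $v^*$ and let $L$ be the associated labelled one-face map; it has $n$ edges and $n+1-2g$ vertices, identified with $V(Q)\setminus\{v^*\}$. Take a spanning tree of $L$ (not of $Q$), oriented towards some $v_0$: its $n-2g$ edges $e$ each have a source $v(e)$, and $e\mapsto v(e)$ is a bijection onto $V(L)\setminus\{v_0\}$. Now each such $e$ corresponds, via the bijection, to a \emph{face} $f(e)$ of $Q$ incident to $v(e)$, and in a bicoloured quadrangle exactly two boundary edges go white-to-black clockwise; collecting these over all $f(e)$ yields a set $E'_Q\subset E(Q)$ of size $2(n-2g)$ and an \emph{exactly} $2$-to-$1$ map $\phi:E'_Q\to V(Q)\setminus\{v_0,v^*\}$. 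Taking $\mu_n^E$ uniform on $E'_Q$ (rather than on all of $E(Q)$), both total-variation estimates follow immediately from the counts, and the ``distance $\le 1$'' property is automatic since $v(e)$ and the two selected edges all lie on the common $4$-face $f(e)$. No matching or Hall-type argument is needed.
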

\begin{proof}
Let $L$ be the l.1.f.m associated to $Q$ via the Marcus-Schaeffer bijection and let $v_0\in V(L)$. The map $L$ has $n+1-2g$ vertices, so it is possible to choose a set $E'_L$ of $n-2g$ edges of $L$ and an orientation of edges of $E'_L$ such that each vertex of $V\setminus\{v_0\}$ has exactly one outgoing edge from $E'_L$ (to see this, take a spanning tree of $L$ and orient edges towards $v_0$). If $e \in E'_L$, we let $v(e)$ be its source, which is an element of $V\setminus\{v_0\}$. The edge $e$ of $E'_L$ is associated, via the Marcus-Schaeffer bijection, to a face $f(e)$ of the quadrangulation $Q$ that is incident to the vertex $v(e)$. We let $E'_Q$ be the subset of edges of $Q$ that border a face of the form $f(e)$ for some $e\in E_L$ and that are oriented from white to black when going clockwise around $f(e)$ (in some fixed bicoloration of $Q$). If $\tilde{e}\in E'_Q$, corresponding to the face $f(e)$, we define $\phi(\tilde{e}):=v(e)$. Since $\tilde{e}$ and $\psi(\tilde{e})$ both border the face $f(e)$, they are at distance at most one from each other. Moreover, if we choose $\tilde{e}$ uniformly at random from $E'_Q$, then by construction $\psi(\tilde{e})$ is uniform in $V\setminus \{v_0\}$. 
	Since $E'_Q$ contains $2(n-2g)$ edges of $Q$ (among $2n$) and $V\setminus\{v_0\}$ contains $n-2g$ vertices of $Q$ (among $n+2-2g$), we can choose $\phi(e)$ arbitrarily among endpoints of $e$ for $e \not \in E'_Q$, and we are done.
\end{proof}

In the following discussion we will implicitly restrict ourselves to a subsequence along which we have the GHP distributional convergence:
$$
(\qgn,\tfrac{1}{n^{1/4}}\mathbf{d}_n,\mu_n)
\longrightarrow
(\qginf, d_\infty, \mu_\infty).
$$
We will need the following direct consequence of~\cite[Thm. 4]{Chapuy:trisections}. We state separately a discrete and a continuous statement, although they are intimately related: 
\begin{lemma}\label{lemma:zeroProba} ~~

(i) Let $(\qginf,d_\infty, \mu_\infty)$ be a Brownian surface of genus $g$ and let $(\vv_1^\infty, \vv_2^\infty, \vv_3^\infty)$ chosen at random according to $\mu_\infty^{\otimes 3}$. Then almost surely we have 
$d_\infty (\vv_1^\infty,\vv_3^\infty) \neq 
d_\infty (\vv_2^\infty,\vv_3^\infty)$. 

(ii) Fix $K\geq 0$, and pick three uniform random vertices $\vv_1^n$, $\vv_2^n$, $\vv_3^n$ in $\qgn$. Then the probability that $|d(\vv_1^n,\vv_3^n)-d(\vv_2^n,\vv_3^n)|\leq K$ goes to zero when $n$ goes to infinity.
\end{lemma}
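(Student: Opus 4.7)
The plan is to derive both statements from the same atomlessness property of the limiting distance profile from a uniform point. For $v\in V(\qgn)$, set $N_r^n(v):=\#\{w\in V(\qgn):\mathbf{d}_n(w,v)=r\}$ and let $\pi_n^v:=\frac{1}{n+2-2g}\sum_r N_r^n(v)\,\delta_{r/n^{1/4}}$ be the rescaled distance profile, a probability measure on $\mathbb{R}_+$. For $v=\vv_3^n$ uniform, $\pi_n^v$ converges jointly with the rescaled map to the random measure $\pi_\infty:=(d_\infty(\cdot,\vv_3^\infty))_*\mu_\infty$, and this limit is almost surely atomless. In genus $0$ this is classical from Le Gall's Brownian-snake construction; in genus $g\ge 1$ it is contained in the rescaled profile estimates for uniform labelled one-face maps used in \cite[Thm.~4]{Chapuy:trisections}, transported to quadrangulations via the Marcus-Schaeffer bijection.

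I would prove~(ii) first. Since $\vv_1^n,\vv_2^n$ are independent uniform vertices, conditioning on $\vv_3^n=v$ gives
\begin{equation*}
\mathbf{P}(|\mathbf{d}_n(\vv_1^n,\vv_3^n)-\mathbf{d}_n(\vv_2^n,\vv_3^n)|\le K \mid \vv_3^n=v) = (\pi_n^v\otimes\pi_n^v)\bigl(\{(x,y):|x-y|\le K/n^{1/4}\}\bigr),
\end{equation*}
which is the mass that the self-product of the distance profile from $v$ puts on a strip whose width tends to $0$. For every fixed $\epsilon>0$ the strip $S_\epsilon:=\{|x-y|\le\epsilon\}$ is closed, and for $n$ large enough $K/n^{1/4}\le\epsilon$. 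Applying the Portmanteau theorem to the joint convergence $\pi_n^{\vv_3^n}\otimes\pi_n^{\vv_3^n}\Rightarrow\pi_\infty\otimes\pi_\infty$ and taking expectations yields
\begin{equation*}
\limsup_{n\to\infty}\mathbf{P}(|\mathbf{d}_n(\vv_1^n,\vv_3^n)-\mathbf{d}_n(\vv_2^n,\vv_3^n)|\le K) \le \mathbf{E}[(\pi_\infty\otimes\pi_\infty)(S_\epsilon)].
\end{equation*}
Sending $\epsilon\downarrow 0$ and using that $\pi_\infty$ is almost surely atomless (so $\pi_\infty\otimes\pi_\infty$ puts no mass on the diagonal) yields~(ii).

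For~(i), the same atomlessness applied directly finishes the argument: conditionally on $\vv_3^\infty$, the two numbers $d_\infty(\vv_i^\infty,\vv_3^\infty)$, $i=1,2$, are i.i.d.\ samples from the a.s.\ atomless measure $\pi_\infty$, hence coincide with probability zero. Alternatively, (i) follows from (ii) by passing to the Brownian-map limit along the subsequence. The main obstacle I foresee is providing a short self-contained justification of the atomlessness of $\pi_\infty$ in positive genus; a clean way is to invoke the continuity of sample paths of the rescaled label process of a uniform labelled one-face map of genus $g$, which underlies the Marcus-Schaeffer-type bijections of \cite{CMS,Chapuy:trisections} and rules out atoms of $\pi_\infty$ just as in the genus~$0$ case.
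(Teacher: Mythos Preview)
Your approach is essentially the same as the paper's: both hinge on the convergence of the rescaled distance profile $\pi_n^{\vv_3^n}$ (the paper's $\eta_n^{(g)}$) to an almost surely atomless limit, cited from \cite{Chapuy:trisections}, and both deduce~(ii) by bounding the mass of a shrinking strip in the self-product. The only cosmetic difference is that you invoke Portmanteau on the closed set $S_\epsilon$, while the paper squeezes with a continuous $f_\alpha$ satisfying $h_\alpha\le f_\alpha\le h_{2\alpha}$; these are equivalent devices.

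One small point on~(i): your direct argument assumes that the profile $\pi_\infty=(d_\infty(\cdot,\vv_3^\infty))_*\mu_\infty$ in the Brownian map itself is atomless, but the result you cite from \cite{Chapuy:trisections} concerns the limit $\eta_g$ of the \emph{discrete} profiles; identifying $\eta_g$ with $\pi_\infty$ requires precisely a GHP-coupling step. The paper sidesteps this by instead deducing~(i) from~(ii) via the coupling of \cite[Prop.~6]{Miermont:tessellations}, which you also mention as an alternative. Either route works; just be aware that the ``direct'' one is not quite free. The paper also pins down the atomlessness more concretely via the relation of $\eta_g$ to ISE (\cite{MBMJanson}) rather than appealing to continuity of the label process.
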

\begin{proof}
It is proved in~\cite{Chapuy:trisections} that if $\vv_1\in_u V(\qgn)$, the random measure 
$$\eta_n^{(g)}:=\displaystyle \tfrac{1}{|V(\qgn)|}\sum_{v \in V(\qgn)} \delta_{d_n(v,\vv_1)/n^{1/4}}$$ 
converges in distribution to a random measure $\eta_g$ that, almost surely, has no atoms (the latter fact following from the fact that it is true for the ISE measure, see e.g.~\cite{MBMJanson}, and from the relation between $\eta_g$ and ISE given in~\cite{Chapuy:trisections}).
Now for $\alpha>0$, let $\pnalpha:=\mathbf{P}\big\{|d(\vv_1^n,\vv_3^n)-d(\vv_2^n,\vv_3^n)|\leq \alpha n^{1/4}\big\} = \mathbf{E}\langle ({\eta_n^{(g)}})^{\otimes 2} | h_\alpha\rangle$ where $h_\alpha(x,y):=\mathbf{1}_{|y-x|\leq \alpha}$ (here for a measure $\nu$ and a function $h$  we note $\langle\nu|h\rangle:=\int h(x)d\nu(x)$).
Convergence in law implies that $\lim_n \mathbf{E}\langle(\eta_n^{(g)})^{\otimes 2} | f \rangle = \mathbf{E} \langle \eta_g^{\otimes 2}| f\rangle$ for any bounded and continuous function $f$, so choosing $f=f_\alpha$ continuous such that $h_\alpha \leq f_\alpha \leq h_{2\alpha}$ we get 
$\limsup_n p^n_\alpha \leq \mathbf{E}\langle ({\eta_g})^{\otimes 2} | h_{2\alpha}\rangle$, and since $\eta_g$ has no atoms we get:
\begin{eqnarray}\label{eq:limsup}
\lim_{\alpha\rightarrow 0} \limsup_n \pnalpha =0,
\end{eqnarray}
from which (ii) follows (in fact, in a much stronger form that allows $K$ to be as large as $o(n^{1/4})$).

Now, by GHP convergence and \cite[Prop. 6]{Miermont:tessellations} one can define on the same probability space $(\qgn,\tilde\vv_1^n,\tilde\vv_2^n,\tilde\vv_3^n)$ and $(\qginf,\tilde\vv_1^\infty, \tilde\vv_2^\infty, \tilde\vv_3^\infty)$ such that $\tilde\vv_i^{M}$ is $o(1)$-close to $\vv_i^{M}$ in total variation distance for each $i\in\{1,2,3\}$ and $M\in\{n,\infty\}$, and such that almost surely $|d_n(\tilde\vv_i^n,\tilde\vv_j^n)/n^{1/4}-d_\infty(\tilde\vv_i^\infty, \tilde\vv_j^\infty)|=o(1)$ for each $i$, $j$.
Letting $q_\alpha:=\mathbf{P}\{|d_\infty (\vv_1^\infty,\vv_3^\infty) - 
d_\infty (\vv_2^\infty,\vv_3^\infty)|\leq \alpha\}$, it easily follows that 
$$q_{\alpha/2} \leq \limsup_n \pnalpha.$$
From \eqref{eq:limsup} this implies that $\limsup_{\alpha\rightarrow 0} q_\alpha =0$, which implies (i). 
\end{proof}

\subsection{Remaining proofs, II: Lemma~\ref{lemma:convergenceCell}}\label{sec:technical}

Before proving Lemma~\ref{lemma:convergenceCell}, we need to describe more precisely the objects Miermont's bijection leaves us with.
We use the same notation as in the previous section for objects counted by $A_g(z)$ (marked faces $F_1, F_2$, minimum label in each face $0,\delta$, marked corners $c_1,c_2$). We will introduce the refinement
$$
A_g(z) = \sum_{\epsilon\in\{-1,0,1\}} A_g^\epsilon(z)
$$
where $A_g^\epsilon(z)$ counts the same objects as $A_g(z)$ but with the restriction that $\ell(c_2)-\ell(c_1)=\epsilon$.
Then we have:
\begin{lemma}%
	\label{lemma:MiermontConstraints}
For each $\epsilon\in\{-1,0,1\}$, the labelled two-face maps counted by $[z^{n}]A^{\epsilon}_g(z)$ are in bijection with tuples $(Q,s_1,s_2,e_1,e_2)$ such that:
\begin{itemize}[itemsep=0pt,topsep=0pt, parsep=0pt, leftmargin=30pt] 
\item[(M0)] $Q$ is a bipartite quadrangulation of genus $g$ with $n$ faces (unrooted);
\item[(M1)] $s_1,s_2$ are two vertices of $Q$ and $e_1,e_2$ are two marked edges of $Q$;
\item[(M2)] for $j\in\{1,2\}$ let $m_j$ be the endpoint of $e_j$ closer from $s_j$, %
and let $\delta := d(s_1,m_1)+\epsilon-d(s_2,m_2).$
Then the quantity $d(s_1,s_2)+\delta$ is even.
\item[(M3)]
	Label each vertex $v$ of the quadrangulation $Q$ by $\ell(v):=\min(d(v,s_1),d(v,s_2)+\delta)$, orient each edge towards its vertex of minimum label, and define leftmost geodesic paths as in Section~\ref{sec:prel1}.
		Then for $i\in\{1,2\}$, the leftmost geodesic path starting at $e_i$ ends at $s_i$.
\end{itemize}
\end{lemma} 
\begin{proof} This follows from Miermont's bijection \cite[Thm. 4]{Miermont:tessellations} as described in Section~\ref{sec:prel1}, applied to the $2$-pointed quadrangulation $(Q;s_1,s_2;0,\delta)$. The only subtle point is to notice that condition~\eqref{eq:miermont2} is ensured by (M3), for if \eqref{eq:miermont2} did not hold, either $s_1$ or $s_2$ would \emph{not} be a local minimum of the labelling and the leftmost geodesic paths would not stop at that vertex\footnote{We thank a referee for this remark.}.
\end{proof}

Note that, since $Q$ is bipartite, the property $(M2)$ is equivalent to the following:
\begin{itemize}[itemsep=0pt,topsep=0pt, parsep=0pt, leftmargin=30pt] 
\item[(M'2)]
$d(m_1,m_2) \equiv \epsilon \mod 2.$
\end{itemize}
As for the complicated property $(M3)$, up to subdominating cases, it can be rephrased in simpler terms closely related to nearest neighbours tessellations. Indeed, we have:
\begin{lemma}\label{lemma:GeoToVor}
Let $b_g^\epsilon(n)$ be the number of tuples $(Q,s_1,s_2,e_1,e_2)$ satisfying  (M0), (M1), (M2) of the last lemma, and such that moreover we have:
\begin{itemize}[itemsep=0pt,topsep=0pt, parsep=0pt, leftmargin=30pt] 
\item[(M'3)]
$d(s_1,e_1) < d(s_1,e_2) -4$ and $ 
d(s_2,e_2) < d(s_2,e_1) -4 .$
\end{itemize}
Then for each $\epsilon\in \{-1,0,1\}$ we have $[z^n]A_g^{\epsilon}(z) = b^\epsilon_g(n) + o(n^3m_g(n))$.
\end{lemma}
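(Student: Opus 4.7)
The plan is to reformulate (M3) as an explicit pair of distance inequalities and to show that its symmetric difference with (M'3) is contained in a ``boundary'' event of size $o(n^3 m_g(n))$, as controlled by Lemma~\ref{lemma:zeroProba}. Unpacking Miermont's leftmost-geodesic rule, (M3) is equivalent, up to tie-breaking on the cell boundary, to the statement that each $m_i$ lies strictly inside the $s_i$-cell of the $\delta$-shifted labeling, i.e.
$$d(m_1,s_1)<d(m_1,s_2)+\delta \quad \text{and} \quad d(m_2,s_2)+\delta<d(m_2,s_1).$$
Substituting $\delta=d(s_1,m_1)+\epsilon-d(s_2,m_2)$ from (M2), these reduce to
$$d(m_2,s_2)<d(m_1,s_2)+\epsilon \quad \text{and} \quad d(m_1,s_1)<d(m_2,s_1)-\epsilon.$$

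Since $|d(s_j,e_i)-d(s_j,m_i)|\leq 1$ and $|\epsilon|\leq 1$, the slack ``$-4$'' in (M'3) is wider than needed, so (M'3) directly implies the two reformulated inequalities above, hence implies (M3) (the excess slack also absorbing any tie-breaking quirks of the leftmost-geodesic rule). Conversely, if a tuple counted by $[z^n]A_g^\epsilon(z)$ fails (M'3), then one of the two (M'3) inequalities is reversed, which combined with the corresponding reformulated (M3)-inequality forces $|d(m_1,s_j)-d(m_2,s_j)|\leq 5$ for at least one $j\in\{1,2\}$. Thus the symmetric difference between the two counted sets is contained in the set of tuples displaying such a ``near-equality'' of distances.

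To bound this exceptional set, I would invoke Lemma~\ref{lemma:zeroProba}(ii), after transferring from edges to vertices via Lemma~\ref{lemma:vertexvsedge}. The set of tuples satisfying (M0)--(M2) has cardinality $\Theta(n^3 m_g(n))$ by Remark~\ref{rem:1}, and the uniform measure on it can be identified up to total variation $O(1/n)$ with the product of: uniform $Q\in_u\QQgn$, uniform vertices $s_1,s_2$, and the measure $\mu_n^E$ on edges $e_1,e_2$. Coupling each $e_i$ with a near-uniform vertex $\phi(e_i)$ at graph distance $\leq 2$ from $m_i$, the event ``$|d(m_1,s_j)-d(m_2,s_j)|\leq 5$'' is contained in the event ``$|d(\phi(e_1),s_j)-d(\phi(e_2),s_j)|\leq 9$'', whose probability tends to $0$ by Lemma~\ref{lemma:zeroProba}(ii) applied to the triple $(\phi(e_1),\phi(e_2),s_j)$. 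Multiplying by $\Theta(n^3 m_g(n))$ then yields $|[z^n]A_g^\epsilon(z)-b_g^\epsilon(n)|=o(n^3 m_g(n))$. The main obstacle I anticipate is the careful bookkeeping of the tie-breaking in the leftmost-geodesic rule together with the $\pm 1$ corrections (coming from $|\epsilon|\leq 1$ and from the discrepancy between $d(s,e)$ and $d(s,m_i)$), so as to verify that the ``$-4$'' margin in (M'3) truly suppresses all boundary effects in both directions of the symmetric difference.
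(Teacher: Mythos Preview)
Your proposal is correct and follows essentially the same route as the paper: show (M'3)$\Rightarrow$(M3) by checking that the strict margin forces the minimum in $\ell(m_i)=\min(d(m_i,s_1),d(m_i,s_2)+\delta)$ to be attained uniquely at the $i$-th argument, show that (M3) conversely yields the weak inequalities $d(s_1,m_1)+\epsilon\le d(s_1,m_2)$ and $d(s_2,m_2)\le d(s_2,m_1)+\epsilon$, and conclude that the symmetric difference is trapped in a near-equality event handled by Lemmas~\ref{lemma:vertexvsedge} and~\ref{lemma:zeroProba}(ii). The one point where the paper is sharper than your sketch is the translation between (M3) and the distance inequalities: rather than asserting an equivalence ``up to tie-breaking'', the paper uses that a \emph{strict} minimum at the $i$-th argument implies (M3) by a specific inclusion from~\cite[Sec.~2.2]{Miermont:tessellations}, and that (M3) only guarantees the minimum is attained (possibly non-uniquely) at the $i$-th argument---exactly the asymmetry you flagged as the anticipated obstacle.
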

Note that $(M'3)$ implies  that $d(s_1,m_1) < d(s_1,m_2) -2$ and $ 
d(s_2,m_2) < d(s_2,m_1) -2.$

\begin{proof}
Let $(Q,s_1,s_2,e_1,e_2)$ satisfying the hypotheses of Lemma~\ref{lemma:GeoToVor}. Then we claim that it also satisfies the hypotheses of Lemma~\ref{lemma:MiermontConstraints}. Indeed define $\ell(v)=\min(d(s_1,v),d(s_2,v)+\delta)$ as in Lemma~\ref{lemma:MiermontConstraints}. We observe that (M'3) implies that 
$$ d(s_1,m_1) < d(s_1,m_1)+(d(s_2,m_1)-d(s_2,m_2)+\epsilon) = d(s_2,m_1)+\delta,$$
which shows that the minimum in the definition of $\ell(v)$ for $v=m_1$ is reached only by its \emph{first} argument. Similarly, for $v=m_2$ we have by (M'3) that:
$$ d(s_2,m_2) < d(s_2,m_2)+(d(s_1,m_2)-d(s_1,m_1)-\epsilon)
=d(s_1,m_2)-\delta,
$$
which shows that the minimum in the definition of $\ell(v)$ for $v=m_2$ is reached only by its \emph{second} argument.
	Therefore, the first inclusion in~\eqref{eq:inclusionCells}
	precisely says that (M3) is satisfied.

\smallskip

Conversely assume the hypotheses of Lemma~\ref{lemma:MiermontConstraints}. By the second inclusion in~\eqref{eq:inclusionCells},
property $(M3)$ ensures that
the minima defining $\ell(m_1)=\min(d(m_1,s_1),d(m_1,s_2)+\delta)$ and $\ell(m_2)=\min(d(m_2,s_1),d(m_2,s_2)+\delta)=\min(d(m_2,s_1),d(m_1,s_1)+\epsilon)$ are reached respectively by their first and second argument (and possibly reached twice). This implies:
$$
d(s_1,m_1)+\epsilon \leq d(s_1,m_2), \ d(s_2,m_2)\leq d(m_1,s_2)+\epsilon.
$$
Thus, if hypothesis (M'3) is \emph{not} satisfied, it must hold that either $|d(s_1,m_1)-d(s_1,m_2)|\leq 2$ or $|d(s_2,m_2)-d(s_2,m_1)|\leq 2$. It thus suffices to show that there are at most $o(n^3m_g(n))$ tuples $(Q,s_1,s_2,e_1,e_2)$ such that one of these two properties holds. For this it suffices to show that if $(\qq,\ee_1) \in_u \QQgn$ is a random rooted quadrangulation and $(\sst_1,\sst_2,\ee_2)$ are two vertices and an edge chosen independently uniformly at random in $\qq$, the probability that $|d(\sst_1,\ee_2)-d(\sst_1,\ee_2)|\leq 2$ or $|d(\sst_2,\ee_2)-d(\sst_2,\ee_1)|\leq 2$ goes to zero as $n$ goes to infinity. This directly follows from Lemmas~\ref{lemma:vertexvsedge} and~\ref{lemma:zeroProba}(ii).
\end{proof}

In view of getting rid of the constraint $(M'2)$, we state the following lemma:
\begin{lemma}\label{lemma:equalContrib}
For any $\epsilon\in \{-1,0,1\}$ we have as $n$ goes to infinity:
$$
[z^n] A^\epsilon_g  (z) \sim \frac{1}{3} [z^n]A_g(z).
$$
\end{lemma}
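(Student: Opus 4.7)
The plan is to apply Lemma~\ref{lemma:GeoToVor} to reduce to a counting question on tuples, and then combine an exact symmetry with a parity equidistribution argument. First, the involution of unrooted labelled two-face maps that exchanges $(F_1,c_1)$ and $(F_2,c_2)$ (and retranslates labels so that the new minimum in the first face is again $0$) sends $A_g^\epsilon$ to $A_g^{-\epsilon}$, hence $[z^n]A_g^{+1}(z)=[z^n]A_g^{-1}(z)$ for every $n$. The statement therefore reduces to $[z^n]A_g^{0}(z)\sim[z^n]A_g^{+1}(z)$, which via Lemma~\ref{lemma:GeoToVor} becomes $b_g^0(n)\sim b_g^{+1}(n)$. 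Crucially, since (M'3) does not depend on $\epsilon$ and (M'2) only constrains the parity of $d(m_1,m_2)$, the counts $b_g^0(n)$ and $b_g^{+1}(n)$ are exactly the numbers of tuples in $B_g(n):=\{(Q,s_1,s_2,e_1,e_2):\text{(M0), (M1), (M'3)}\}$ for which $d(m_1,m_2)$ is respectively even or odd. Since $Q$ is bipartite this is the same as asking whether $m_1$ and $m_2$ lie in the same color class, so the claim reduces to showing that asymptotically half the tuples in $B_g(n)$ satisfy each case.

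To establish this equidistribution, I propose to construct a near-involution $\Phi$ on $B_g(n)$ that flips the color of $m_2$ while leaving the rest of the tuple essentially unchanged. Explicitly, fix a canonical cyclic order of half-edges around each vertex (using the map embedding together with an arbitrary rooting of $Q$, inserted at the cost of a global factor). Given $T=(Q,s_1,s_2,e_1,e_2)$ and $b:=m_2$, let $w'$ be the first neighbor of $b$ in this cyclic order satisfying $d(s_2,w')=d(s_2,b)-1$; such a $w'$ exists unless $b=s_2$, which occurs on a subset of $B_g(n)$ of size $o(n^3 m_g(n))$ by Lemma~\ref{lemma:vertexvsedge}. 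Setting $\Phi(T):=(Q,s_1,s_2,e_1,(b,w'))$, the new $m_2$-value is $w'$, of opposite color to $b$, so the parity of $d(m_1,m_2)$ is flipped. A symmetric prescription defines the partner modification on odd-parity tuples.

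Since $e_2$ and its image share the vertex $b$, one has $|d(s_i,e_2)-d(s_i,\Phi(e_2))|\le 1$ for $i=1,2$, so the gaps appearing in (M'3) are perturbed by at most~$1$, and $\Phi$ sends $B_g(n)$ into itself except on a ``border'' set of tuples whose gap in (M'3) is within $O(1)$ of the threshold~4. By the same argument as in the proof of Lemma~\ref{lemma:GeoToVor} (applying Lemma~\ref{lemma:zeroProba}(ii) to the observables $d(s_i,e_{3-i})-d(s_i,e_i)$), this border set has size $o(n^3 m_g(n))$. Up to this negligible error, $\Phi$ pairs even-parity tuples with odd-parity tuples, which yields $b_g^0(n)\sim b_g^{+1}(n)$ and concludes.

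The main obstacle is turning $\Phi$ into a genuine near-bijection. The naive ``first neighbor in cyclic order'' rule is not strictly involutive, so one must either refine it (for instance, by a deterministic tie-breaking based on the local label/distance pattern around $b$) until $\Phi\circ\Phi$ differs from the identity only on a set of size $o(n^3m_g(n))$, or alternatively count the fibers of $\Phi$ directly and check that the map is uniformly $O(1)$-to-$O(1)$ outside an exceptional set of this size. Both approaches are purely local-combinatorial and rest ultimately on the non-atom estimates provided by Lemmas~\ref{lemma:vertexvsedge} and~\ref{lemma:zeroProba}.
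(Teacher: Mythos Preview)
Your approach is genuinely different from the paper's. The paper does not pass through Lemma~\ref{lemma:GeoToVor} at all: it argues directly on the labelled two-face maps via the \emph{scheme decomposition} of~\cite{CMS}, observing that the constraint $\ell(c_2)-\ell(c_1)=\epsilon$ only shifts one of the ``branch labels'' by $\pm1$, which perturbs the dominant singularity of $A_g^\epsilon(z)$ by a factor $1+O((1-12z)^{1/4})$ and hence leaves the leading coefficient unchanged. Your route---use the face-swap involution to get $[z^n]A_g^{+1}=[z^n]A_g^{-1}$ exactly, then reduce via Lemma~\ref{lemma:GeoToVor} to showing that the parity of $d(m_1,m_2)$ is asymptotically equidistributed on $B_g(n)$---is an appealing alternative, and the first two steps are correct.

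The gap is in the construction of $\Phi$. With the ``canonical cyclic order'' you fix, the neighbour $w'$ depends only on $(Q,s_2,b)$ and \emph{not} on $e_2$; consequently every edge $e_2$ having $m_2=b$ (there are as many as the up-degree of $b$ with respect to $s_2$) is sent to the \emph{same} edge $(b,w')$. So $\Phi$ is (up-degree)-to-one, not close to a bijection, and vertex degrees in $\qgn$ are unbounded. Even if you instead start the cyclic order at the half-edge of $e_2$ (so that $w'$ depends on $e_2$), the natural ``go counterclockwise to the first up-edge'' inverse is applied at the \emph{new} $m_2=w'$, not at $b$, so you are comparing up/down patterns around different vertices and the two maps do not compose to the identity. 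Your two suggested fixes (refine the rule to a near-involution, or control the fibers) are not carried out, and I do not see how Lemmas~\ref{lemma:vertexvsedge}--\ref{lemma:zeroProba} alone bound the defect: those lemmas control near-equalities of \emph{rescaled} distances, whereas what you need here is an equidistribution of the \emph{parity} of $d(s_2,m_2)$ over edges $e_2$, which amounts to $\sum_{k\text{ even}}|E_k|\sim\sum_{k\text{ odd}}|E_k|$ for the BFS layers $E_k$ around $s_2$. That statement is plausible (it should follow from smoothness of the limiting distance profile), but it is an extra input, not a consequence of a local edge-swap. As written, the argument is incomplete at exactly the point you flag as ``the main obstacle''.
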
 
\begin{proof}[Proof (sketch)]
This can be proved by asymptotic analysis of generating functions using a simple adaptation of the method developed in \cite{CMS} for the enumeration of labelled one-face maps by \emph{scheme decomposition}: one can enumerate objects counted by $A^\epsilon_g(z)$ with this approach and realize that changing the parameter $\epsilon$ only affects the main term of the singular expansion by a factor $1-O((1-12z)^{1/4})$, from which the result follows.
We leave details to the reader.
\end{proof}

We are now ready to conclude the proof. 
\newcommand{\tqgn}{{\tilde{\mathbf{q}}}^{(g)}_n}
\begin{proof}[{Proof of Lemma~\ref{lemma:convergenceCell}}]

First, we remark that from the last lemma:
\begin{eqnarray}\label{eq:linearSmart}
[z^n] \big(A_g^0(z) + A_g^1(z)\big) \sim \frac{2}{3} [z^n] A_g(z),
\end{eqnarray}
while from Lemma~\ref{lemma:GeoToVor} and the remark preceeding it, $[z^n] A_g^0(z) + A_g^1(z)$ is equivalent to the number of tuples $(Q,s_1,s_2,e_1,e_2)$ satisfying properties $(M0),(M1)$, and $(M'3)$ (note that property $(M'2)$ disappears since we sum over both parities $\epsilon=0,1$). We will thus focus on such objects in the rest of the proof.

We note that a bipartite quadrangulation with a marked edge $e_1$ and a marked vertex $v_1$ can be canonically rooted by orienting $e_1$ towards its unique endpoint at even distance from $v_1$. This gives a one-to-two correspondence between elements of $\mathcal{Q}^{g}_n$ with a marked vertex and (unrooted) bipartite quadrangulations of genus $g$ with a marked vertex and a marked unoriented edge.
We thus have:
\begin{eqnarray}\label{eq:proba}
\frac{[z^n]\big(A_g^0(z) + A_g^1(z)\big)}
{(2n) \cdot n^2 \cdot  m_g(n)}
\sim \frac{1}{2}\cdot \mathbf{P}\Big\{d(\sst_1,\ee_1) < d(\sst_1,\ee_2) -4, \  
d(\sst_2,\ee_2) < d(\sst_2,\ee_1) -4 \Big\},
\end{eqnarray}
 where the probability is taken over $\qgn\in_u \mathcal{Q}^g_n$ with two uniform marked vertices $\sst_1,\sst_2$, a uniform marked edge $\ee_2$ and $\ee_1$ is the root edge (in the denominator, the factor $n^2$ corresponds to the choice of the two vertices, while the factor $(2n)$ corresponds to the choice of the edge $\ee_2$).

We recall that we implicitly restrict ourselves to a subsequence along which  the GHP distributional convergence
$$
(\qgn,\tfrac{1}{n^{1/4}}\mathbf{d}_n,\mu_n)
\longrightarrow
(\qginf, d_\infty, \mu_\infty)
$$
holds. We will make use of this convergence using a coupling between $\qgn$ and $\qginf$.
 More precisely, according to \cite[Proposition~6]{Miermont:tessellations}, we can build $\qgn$ and $\qginf$ on the same probability space, and define a measure $\nu$ on $\qgn \times \qginf$ such that for each $k\geq 1$, if $(\tilde\ww^i_n, \tilde\ww^i_\infty)_{1\leq i \leq k} \sim \nu^{\otimes k}$ we have almost surely $|d_n(\tilde\ww^i_n,\tilde\ww^j_n)n^{-1/4}-d_\infty(\tilde\ww_\infty^i, \tilde\ww_\infty^j)| \leq \epsilon_n$ for any $i,j$, and moreover the law of $\tilde\ww^i_n$ (resp. $\tilde\ww^i_\infty)$ differs from $\mu_n$ (resp. $\mu_\infty$) by at most $\epsilon_n$ in total variation distance, where $\epsilon_n$ is a nonnegative real sequence going to zero when $n$ goes to infinity.
We will apply this with $k=4$. Using Lemma~\ref{lemma:vertexvsedge}, we can moreover assume the vertices $\tilde\ww_3^n$ and $\tilde\ww_4^n$ are at distance at most $2$ of two random edges $\tilde\ee_1^n$ and $\tilde\ee_2^n$ respectively, and that the law of $\tilde\ee_1^n$ and $\tilde\ee_2^n$ is $\epsilon_n$-close in total variation to that of two uniform random edges (if necessary, we modify the sequence $\epsilon_n$ for this to be true, still asking that $\epsilon_n\rightarrow 0$). 

If $v_1,v_2,v_3,v_4$ are points (or subsets) in some metric space of underlying distance~$d$, and $K\in\mathbb{R}$ let us define the events:
$$V_K(v_1,v_2,v_3,v_4) := \{ d(v_1,v_3)<d(v_1,v_4)-K, d(v_2,v_4)<d(v_2,v_3)-K\}
$$
$$
W_K(v_1,v_2,v_3):=\{|d(v_1,v_3)-d(v_2,v_3)|\leq K\}.
$$
By the assumptions made on the coupling between $\qgn$ and $\qginf$ and from the triangle inequality we have, denoting $\Delta$ the symmetric difference:
\begin{align*}
V_{4}(\tilde\ww_1^n, \tilde\ww_2^n, \tilde\ee_1^n,\tilde\ee_2^n)
\Delta
V_0^\infty(\tilde\ww_1^\infty, \tilde\ww_2^\infty, \tilde\ww_3^\infty,\tilde\ww_4^\infty)
\subset W_{\delta_n}^\infty(\tilde\ww_1^\infty, \tilde\ww_2^\infty, \tilde\ww_3^\infty)
\cup W_{\delta_n}^\infty(\tilde\ww_1^\infty, \tilde\ww_2^\infty, \tilde\ww_4^\infty)
\end{align*}
with $\delta_n = O(\epsilon_n+n^{-1/4})$.
We thus have:
\begin{align*}
\limsup_n \big|\mathbf{P}
V_{4}(\tilde\ww_1^n, \tilde\ww_2^n, \tilde\ee_1^n,\tilde\ee_2^n)
-\mathbf{P}
V_0(\tilde\ww_1^\infty, \tilde\ww_2^\infty, \tilde\ww_3^\infty,\tilde\ww_4^\infty)
\big|
&\leq
\limsup_n
2 \mathbf{P}W_{\delta_n}^\infty(\tilde\ww_1^\infty, \tilde\ww_2^\infty, \tilde\ww_3^\infty) \\
&\leq \limsup_n \left(\epsilon_n+
2 \mathbf{P}W_{\delta_n}^\infty(\ww_1^\infty, \ww_2^\infty, \ww_3^\infty)\right)
\end{align*}
where $(\ww_i^\infty)_{1\leq i\leq 3} \sim \mu_\infty^{\otimes 3}$ are three  random vertices in $\qginf$  chosen according to $\mu_\infty$, and where we just used the definition of total variation distance. From Lemma~\ref{lemma:zeroProba}(i), the last $\limsup$ is equal to zero, which implies:
\begin{align*}
\limsup_n \big|\mathbf{P}
V_{4}(\ww_1^n, \ww_2^n, \ee_1^n,\ee_2^n)
-\mathbf{P}
V_0(\ww_1^\infty, \ww_2^\infty, \ww_3^\infty,\ww_4^\infty)
\big|
&=0
\end{align*}
where $\ww_1^n, \ww_2^n, \ee_1^n,\ee_2^n$ are two vertices and two edges of $\qgn$ chosen independently uniformly at random, and where $(\ww_i^\infty)_{1\leq i\leq 4} \sim \mu_\infty^{\otimes 4}$ are uniform in $\qginf$.

Now by rerooting invariance of random quadrangulations $\mathbf{P}
V_{4}(\ww_1^n, \ww_2^n, \ee_1^n,\ee_2^n)$ is equal to the probability appearing in the R.H.S. of~\eqref{eq:proba}, while it follows directly from Lemma~\ref{lemma:zeroProba}(i) and the Fubini theorem that the quantity $
\mathbf{P} 
V_0(\ww_1^\infty, \ww_2^\infty, \ww_3^\infty,\ww_4^\infty)$ is equal to
$\mathbf{E}\XX_g(1-\XX_g)$. This concludes the proof.
\end{proof}

To be fully complete we also state the:
\begin{proof}[Proof of Theorem~\ref{cor:main}]
The only thing to prove is that $\mathbf{E}\mathbf{X}_g = \tfrac{1}{2}$, which is a direct consequence of Lemma~\ref{lemma:zeroProba}.
\end{proof}

\section{Three marked points (proof of Theorem~\ref{thm:3points})}

In this section we sketch the proof of Theorem~\ref{thm:3points}. We will insist on the combinatorial decompositions and the computation, since the details of the convergence results are very similar to what we did in the previous section.

We first need some definitions from~\cite{CMS, Chapuy:trisections}. If $L$ is a one-face map, its \emph{skeleton} is the map obtained by removing all vertices of degree~$1$ in $L$, and continuing to do so recursively until only vertices of degree at least $2$ remain. 
Vertices of a one-face map that are vertices of degree at least $3$ of its skeleton are called \emph{nodes}. A node $v$ that has degree $k$ in the skeleton is called a $k$-node (note that its degree as a vertex in the one-face map can be larger than $k$). A one-face map is \emph{dominant} if all vertices of its skeleton have degree at most $3$, \textit{i.e.} if all its nodes are $3$-nodes. It is proved in \cite{CMS} that for fixed $g$, as $n$ goes to infinity, a proportion at least $1-O(n^{-1/4})$ of l.1.f.m. of genus $g$ with $n$ edges are dominant.
By Euler's formula, a dominant one-face map has $4g-2$ nodes.

Following~\cite{Chapuy:trisections}, we introduce the operation of \emph{opening}. If $L$ is a one-face map and $v$ is a $3$-node of $L$, the \emph{opening} of $v$ is the operation that consists in replacing $v$ by three new vertices, each linked to one edge of the skeleton, and distributing the three (possibly empty) subtrees attached to $v$ among these new vertices as on the following figure:
\begin{center}\includegraphics[width=0.4\linewidth]{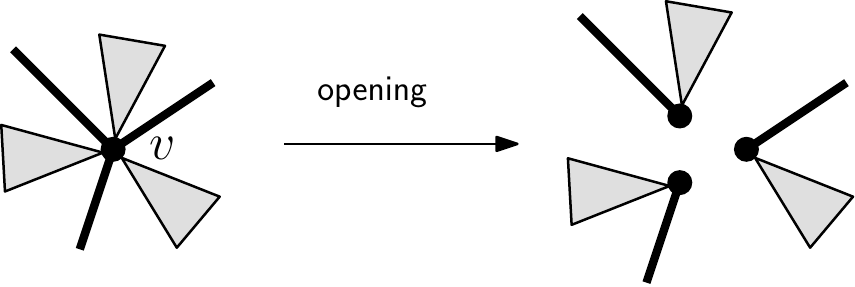}\end{center}
Following~\cite{Chapuy:trisections}\footnote{Strictly speaking, these notions are defined only for dominant maps in that reference. Here it will be convenient for the presentation of the decompositions to extend them to general $3$-nodes -- but this is not a fundamental need, since all quantities involved in our discussion will be led by dominant maps at the first order.}, we distinguish two types of $3$-nodes in a one-face map: \emph{intertwined nodes}, that are such that their opening results in a one-face map of genus $g-1$ with three marked vertices; and \emph{non-intertwined nodes}, that are such that their opening results in one or more maps of total genus $g-2$ with three faces in total, and one marked vertex inside each face. The \emph{trisection lemma}~\cite[Lemma~5]{Chapuy:trisections}, which is the key result underlying this section, asserts that any dominant map of genus $g\geq 1$ has exactly $2g$ intertwined nodes, hence $2g-2$ non-intertwined ones.

It follows that the number $K_{g+2}(n)$ of l.1.f.m. of genus $g+2$ with $n$ edges whose root edge is a skeleton-edge leaving a \emph{non-intertwined} $3$-node satisfies:
\begin{eqnarray}\label{eq:doubleRooting}
K_{g+2}(n)\sim\frac{6(g+1)}{2n} [z^n] L_{g+2}(z).
\end{eqnarray}
Indeed, the first-order contribution is given by dominant l.1.f.m., and in a dominant l.1.f.m. of genus $g+2$ we can choose $3(2(g+2)-2)=6(g+1)$ edges outgoing from a non-intertwined node as a new root edge, but we obtain each map $2n$ times in this way (since maps counted by $L_{g+2}(z)$ are already rooted at one of their $2n$ oriented edges). 

\medskip

We are now going to obtain another expression for the number $K_{g+2}(n)$ by performing a combinatorial decomposition. Comparing the two expressions will, in the end, lead us to Theorem~\ref{thm:3points}.

Let $L$ be a dominant l.1.f.m of genus $g+2$ whose root edge is a skeleton-edge leaving a non-intertwined $3$-node $v$. We distinguish three cases, according to what happens when we perform the opening of the node $v$ (see Figure~\ref{fig:Tutte3}):
\begin{itemize}[itemsep=0pt, topsep=0pt,parsep=0pt, leftmargin=20pt]
\item[(i)] we disconnect the map into three components;
\item[(ii)] we disconnect the map into two components; 
\item[(iii)] we do not disconnect the map.
\end{itemize}
We let $C_{g+2}^{(i)}(z)$, $C_{g+2}^{(ii)}(z)$, $C_{g+2}^{(iii)}(z)$ be the generating function for these three cases, respectively.

\begin{figure}
\begin{center}
\includegraphics[width=\linewidth]{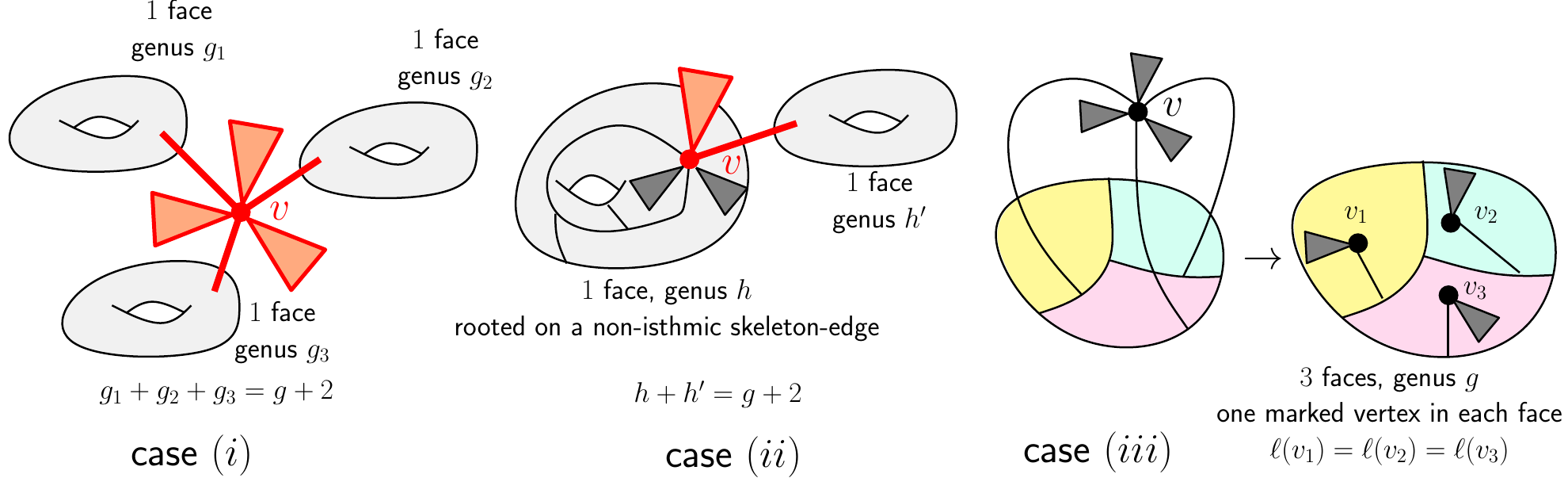}
\caption{The three cases for a one-face map of genus $g+2$ rooted at a skeleton edge leaving a non-intertwined $3$-node $v$.}\label{fig:Tutte3}
\end{center}
\end{figure}

Configurations corresponding to (i) can be reconstructed by starting with three rooted l.1.f.m. of positive genera summing up to $g+2$, and joining the three root vertices by new edges to a new vertex $v$. The generating function for the contribution of this case is thus:
\begin{eqnarray}\label{eq:case1}
C_{g+2}^{(i)}(z) = (3zL_0(z))^3
\sum_{g_1+g_2+g_3=g+2\atop g_1,g_2,g_3>0} L_{g_1}(z)L_{g_2}(z)L_{g_3}(z)
\end{eqnarray}
where for each new edge a factor $3zL_0(z)$ takes into account the increment of this edge, and the attachment of a rooted tree (possibly empty) in the newly created corner (see Figure~\ref{fig:Tutte3}--Left). 

Configurations corresponding to (ii) can be reconstructed by joining with a new edge the root vertex of  a l.1.f.m. to the root vertex of another one which is rooted at a \emph{non-isthmic} edge of its skeleton, and possibly adding a tree in the newly created corner (see Figure~\ref{fig:Tutte3}--Center; the  genera of these two maps sum to $g+2$). Now, arguing as in the previous section, for each $h\geq 1$, the generating function $S_{h}(z)$ of l.1.f.m. of genus $h$ rooted at a non-isthmic edge of their skeleton satisfies:
$$
L_h(z) = S_h(z) + 3z\sum_{g_1+g_2=h\atop g_1,g_2\geq 0} L_{g_1}(z)L_{g_2}(z),
$$
from which we get:
$$
S_h(z)=(1-6zL_0(z))L_h - 3z \sum_{g_1+g_2=h\atop g_1,g_2>0} L_{g_1}(z)L_{g_2}(z).$$
It follows that the contribution for case (ii) is given by:
\begin{eqnarray}\label{eq:case2}
C_{g+2}^{(ii)} = 
3\cdot (3z L_0(z)) \sum_{h+h'=g+2\atop h,h'>0} L_{h'}(z) \left(
(1-6zL_0(z))L_h - 3z \sum_{g_1+g_2=h\atop g_1,g_2>0} L_{g_1}(z)L_{g_2}(z)\right),
\end{eqnarray}
where as before the factor $3zL_0(z)$  takes into account the increment of the newly created edge, and the (possibly empty) rooted tree to attach in the newly created corner, and where the global factor of $3$ takes into account the choice of the root edge among the three skeleton-edges incident to the newly created vertex.

Summing up~\eqref{eq:case1} and~\eqref{eq:case2} we obtain the leading-order contribution for the sum of the first two cases:
$$
C_{g+2}^{(i)}(z) + C_{g+2}^{(ii)} (z) \sim C\cdot (1-12z)^{\frac{3}{2}-\frac{5}{2}(g+2)}
$$
with
\begin{eqnarray*}
C&=& 2^{-5(g+2)} \sum_{\scriptscriptstyle g_1+g_2+g_3=g+2} \tau_{g_1}\tau_{g_2}\tau_{g_3}
+
\frac{3\cdot 4}{2} 2^{-5(g+2)} 
\sum_{\scriptscriptstyle g_1+g_2=g+2} \tau_{g_1}\tau_{g_2} 
-
3\cdot 2^{-5(g+2)} 
\sum_{\scriptscriptstyle g_1+g_2+g_3=g+2} \tau_{g_1}\tau_{g_2} \tau_{g_3}\\
&=&
2^{-5(g+2)} \left(
6\cdot 
\sum_{\scriptscriptstyle g_1+g_2=g+2} \tau_{g_1}\tau_{g_2} 
-2\cdot
\sum_{\scriptscriptstyle g_1+g_2+g_3=g+2} \tau_{g_1}\tau_{g_2} \tau_{g_3}
\right)
\end{eqnarray*}
where we have used that $L_0(\frac{1}{12})=2$, and all sums are taken over positive indices ({\it i.e.} $g_1,g_2,g_3 >0$).

\smallskip

Now, the leading-order contribution for the sum of the three cases (i), (ii), (iii), which from \eqref{eq:doubleRooting} corresponds to the dominant singularity of the generating function 
$3(g+1) \int L_{g+2}(z) \frac{dz}{z}$ is given by (since all series are algebraic we can integrate expansions with no fear):
$$
3(g+1) \frac{1}{\frac{3}{2}-\frac{5}{2}(g+2)} 2^{1-5(g+2)} \tau_{g+2} (1-12z)^{\frac{3}{2}-\frac{5}{2}(g+2)}.
$$
Taking the difference with the previous expression, we obtain that the leading order contribution corresponding to case (iii) is given by 
\begin{eqnarray}\label{eq:case3}
C_{g+2}^{(iii)}(z) \sim C' 2^{-5(g+2)}(1-12)^{\frac{3}{2}-\frac{5}{2}(g+2)}
\end{eqnarray} where:
$$
C'= \frac{12(g+1)}{5g+7} \tau_{g+2}-\left(
6\cdot 
\sum_{\scriptscriptstyle g_1+g_2=g+2} \tau_{g_1}\tau_{g_2} 
-2\cdot
\sum_{\scriptscriptstyle g_1+g_2+g_3=g+2} \tau_{g_1}\tau_{g_2} \tau_{g_3}
\right) .
$$
This expression can be considerably simplified. To this end, define the formal power series $U(s)=\sum_{g\geq 1} \tau_g s^g$. Then the $t_g$-recurrence is equivalent to the equation:
\begin{eqnarray}\label{eq:tgdiff}
U(s)=\tfrac{s}{3} + \tfrac{1}{2}U(s)^2 + \frac{s}{3} 
(5(\tfrac{sd}{ds})+1)
(5(\tfrac{sd}{ds})-1)
U(s).
\end{eqnarray}
In view of the bi- and tri-linear sums appearing in the definition of $C'$, we would like to find an equation involving the series $6U(s)^2-2U(s)^3$.
Luckily, we have:
\begin{lemma}\label{lemma:eliminate}
The following differential equation holds:
\begin{eqnarray}\label{eq:tgdiff2}
\tfrac{4}{15} 
 (5\tfrac{sd}{ds}-3 )_{((5))} \big(s^2U\big)
=
-(5(\tfrac{sd}{ds})-3) \big(6U^2-2U^3\big) 
+ 12 (\tfrac{sd}{ds}-1)\big(U-\tfrac{s}{3}\big)
 - 28 s^2 
\end{eqnarray}
where we use the notation 
$(5\tfrac{sd}{ds}-3 )_{((5))} = 
(5\tfrac{sd}{ds}-3 )(5\tfrac{sd}{ds}-5 )(5\tfrac{sd}{ds}-7 )(5\tfrac{sd}{ds}-9 )(5\tfrac{sd}{ds}-11)$.
\end{lemma}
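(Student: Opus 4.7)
The plan is to verify \eqref{eq:tgdiff2} as a purely algebraic consequence of the ODE \eqref{eq:tgdiff}, in two stages: first I would reduce the left-hand side of \eqref{eq:tgdiff2} using the commutation $\theta \cdot s = s(\theta + 1)$ together with a single application of the ODE, and then I would verify the resulting identity by substituting the ODE into high derivatives of $U$. Throughout, write $\theta := \tfrac{sd}{ds}$ and $W := U - \tfrac{1}{2}U^2$, so that \eqref{eq:tgdiff} reads $(25\theta^2 - 1)U = \tfrac{3}{s}W - 1$, equivalently $25 s\,\theta^2 U = 3W - s(1-U)$, whose $\theta$-derivative simplifies to $25 s\,\theta^3 U = 3(\theta - 1)W + s\,\theta U$.

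In the first stage, pulling $s^2$ outside via $\theta s^k = s^k(\theta + k)$ gives
$$(5\theta-3)_{((5))}(s^2 U) = s^2\,(5\theta+7)(5\theta+5)(5\theta+3)(5\theta+1)(5\theta-1)\,U.$$
The innermost factor $(5\theta+1)(5\theta-1)U = (25\theta^2-1)U$ can be replaced by $\tfrac{3}{s}W - 1$. The commutation $(5\theta + c) \cdot \tfrac{1}{s} = \tfrac{1}{s}(5\theta + c - 5)$ then pulls $\tfrac{1}{s}$ through the three remaining operators, producing $(5\theta+7)(5\theta+5)(5\theta+3) \cdot \tfrac{3}{s}W = \tfrac{3}{s}\cdot 5\theta(25\theta^2-4)W$, which multiplied by $s^2$ gives $15 s\,\theta(25\theta^2-4)W$. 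The other piece $(5\theta+7)(5\theta+5)(5\theta+3)(-1) = -105$ yields $-105 s^2$. Applying the overall factor $\tfrac{4}{15}$, the left-hand side of \eqref{eq:tgdiff2} reduces to $4s\,\theta(25\theta^2 - 4)\,W - 28 s^2$; thus the $-28 s^2$ summand of \eqref{eq:tgdiff2} appears automatically, and it suffices to prove
$$4s\,\theta(25\theta^2 - 4)\,W \;=\; -(5\theta - 3)(6U^2 - 2U^3) + 12(\theta - 1)\bigl(U - \tfrac{s}{3}\bigr).$$

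In the second stage I would verify this remaining identity by reducing both sides to a polynomial expression in $U$, $\theta U$, and $s$, using only the Leibniz rule and the two ODE-derived formulas for $s\theta^2 U$ and $s\theta^3 U$ above. On its right-hand side, an elementary computation gives $(\theta - 1)(U - s/3) = \theta U - U$ and $\theta(6U^2 - 2U^3) = 12(\theta U)W$, so the right-hand side equals $-60(\theta U)W + 18U^2 - 6U^3 + 12\theta U - 12 U$. On the left-hand side, expand $\theta^3 W = (\theta^3 U)(1-U) - 3(\theta U)(\theta^2 U)$ and substitute the formulas for $s\theta^2 U$ and $s\theta^3 U$. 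After using the algebraic identity $(1-U)^2 - 3W = 1 - 5W$, the mixed products $(\theta U)(\theta^2 U)$ and the summands containing $s(\theta U)(1-U)$ cancel, and what remains reduces to the elementary polynomial equality $-12(1-U)W = 18U^2 - 6U^3 - 12 U$, which is immediate by expansion.

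The only obstacle will be to keep the Leibniz-rule bookkeeping correct; the reduction itself is short. The clean cancellation that produces the simple right-hand side is the ``miracle'' alluded to in the lemma statement, and reflects the integrability of the underlying ODE \eqref{eq:tgdiff}.
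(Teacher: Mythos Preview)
Your proof is correct. I have checked each step: the operator commutations, the reduction of the left-hand side to $4s\,\theta(25\theta^2-4)W - 28s^2$, the formulas $25s\theta^2 U = 3W - s(1-U)$ and $25s\theta^3 U = 3(\theta-1)W + s\theta U$, the cancellation of the $s(1-U)\theta U$ terms (coefficients $4+12-16=0$), the identity $(1-U)^2 - 3W = 1-5W$, and the final polynomial equality $-12(1-U)W = -12U + 18U^2 - 6U^3$.

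Your route is genuinely different from the paper's. The paper proceeds by brute force: it writes the ODE~\eqref{eq:tgdiff} and its first three $d/ds$-derivatives as a triangular linear system in $U'',U''',U^{(4)},U^{(5)}$, solves for these in terms of $U,U'$, substitutes into the expanded left-hand side of~\eqref{eq:tgdiff2}, and checks the result --- a computation the authors describe as ``brutal'' and ``easily computerized''. You instead exploit the structure of the operator: the substitution $W=U-\tfrac{1}{2}U^2$ makes the ODE read $(25\theta^2-1)U=\tfrac{3}{s}W-1$, and the commutation $\theta s^k = s^k(\theta+k)$ lets you collapse the fifth-order operator against $s^2$ and then against $\tfrac{1}{s}$, so that only a single $\theta$-derivative of the ODE is ever needed. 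The payoff is that your verification is short enough to be done by hand and actually shows \emph{why} the constant $-28s^2$ and the combination $6U^2-2U^3$ appear; the paper's method establishes the identity without illuminating it. The trade-off is that the paper's method is mechanical and requires no insight, so it generalises more readily to analogous identities.

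One minor phrasing point: when you say ``the mixed products $(\theta U)(\theta^2 U)$ \dots\ cancel'', what actually happens is that the substitution $25s\theta^2 U = 3W - s(1-U)$ eliminates $\theta^2 U$ from the expression, and then the resulting $s(1-U)\theta U$ terms cancel numerically. The sentence as written is slightly ambiguous but the computation behind it is sound.
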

\noindent Extracting the coefficient of $s^{g+2}$ in~\eqref{eq:tgdiff2}, we directly obtain that the constant $C'$ can be rewritten in the much simpler form:
$$
C' = \frac{4}{15} (5g+5)(5g+3)(5g+1)(5g-1) \tau_{g}.
$$

To sum up the present discussion, we have determined the first order asymptotic of the generating function $C_{g+2}^{(iii)}(z)$ of rooted maps counted by case (iii). Applying standard transfer theorems, the corresponding coefficient $c_{g+2}^{(iii)}(n):=[z^n]C_{g+2}^{(iii)}(z)$ satisfies:
\begin{eqnarray}\label{eq:cg3}
c_{g+2}^{(iii)}(n)\sim \frac{1}{15} 2^{6-5(g+2)} / \Gamma(\tfrac{5}{2}g-\tfrac{1}{2}) \tau_g \cdot n^{\frac{5}{2}(g+1)} 12^n,
\end{eqnarray}
where we have used that $(5g+5)(5g+3)(5g+1)(5g-1) \Gamma(\tfrac{5}{2}g-\tfrac{1}{2})=2^4\cdot \Gamma(\tfrac{5(g+2)}{2}-\tfrac{3}{2})$

\medskip
 It is now time to apply Miermont's bijection. If we disconnect the three endpoints belonging to the skeleton and the root vertex in a map from case (iii), we obtain a labelled map of genus $g$ with three faces, with one marked vertex inside each face, subject to the constraint that those three vertices have the same label (see Figure~\ref{fig:Tutte3}-Right). Miermont's bijection transforms this object into a bipartite quadrangulation of genus $g$ with \emph{six} marked vertices $(s_1,s_2,s_3,v_1,v_2,v_3)$, such that for each $i\in\{1,2,3\}$ the source $v_i$ is closer from the vertex $s_i$ than from the two other vertices $s_j$ (to see this, write precisely the inequalities analogue to~\eqref{eq:crossed} as in the previous section). Arguing as in the previous section (see the sketch of proof below), up to subdominating cases, this property  asymptotically characterizes those configurations, and we get:
\begin{lemma}\label{combiToMoment3points}
The number $c_{g+2}^{(iii)}(n)$ of configurations in case (iii) satisfies:
$$
\frac{c_{g+2}^{(iii)}(n)}{n^5/4 \cdot 2^{-2} m_g(n)} \sim \mathbf{E}[\YY_g^{(1:3)}\YY_g^{(2:3)}\YY_g^{(3:3)}].
$$
\end{lemma}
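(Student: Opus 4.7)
The plan is to mimic the proof of Lemma~\ref{lemma:convergenceCell} step by step, now applying Miermont's bijection with $K=3$ sources instead of $K=2$. First I would state the analog of Lemma~\ref{lemma:MiermontConstraints}: case (iii) objects, which are labelled three-face maps of genus $g$ with one marked vertex in each face (the three marked vertices carrying the same label) and inheriting a rooting from the original l.1.f.m.\ of genus $g+2$, are in bijection with tuples $(Q, s_1, s_2, s_3, v_1, v_2, v_3)$ consisting of a canonically-rooted bipartite quadrangulation $Q$ of genus $g$ with $n$ faces, three ordered source vertices $s_i$ and three ordered target vertices $v_i$ (corresponding respectively to the sources that Miermont's construction adds inside each face and to the three marked vertices of the 3-face map), satisfying two parity constraints (one per shift $\delta_2, \delta_3$) together with a leftmost-geodesic condition of the kind (M3).

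Second, in analogy with Lemma~\ref{lemma:GeoToVor}, I would show that this leftmost-geodesic condition can be replaced by the clean strict Voronoi-type inequalities $d(s_i, v_i) < d(s_i, v_j) - C$ for all $i \neq j$ and some absolute constant $C$, with only a subdominant error. This relies on Lemma~\ref{lemma:zeroProba}(ii) applied nine times (once to each triple $(s_i, v_j, v_k)$ with $i \in \{1,2,3\}$ and $j < k$) in order to rule out approximate ties between the distances $d(s_i, v_1), d(s_i, v_2), d(s_i, v_3)$.

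Third, following Lemma~\ref{lemma:equalContrib}, a scheme-decomposition argument of the kind developed in~\cite{CMS} shows that the two parity constraints contribute a joint asymptotic factor $\tfrac{1}{4}$ and may otherwise be dropped. Combining this with the bookkeeping of the canonical rooting (as in the discussion surrounding equation~\eqref{eq:proba}, now adapted to a quadrangulation with three marked source-vertices and three marked target-vertices, the root edge being canonically chosen among edges incident to $v_1$ using the parity of its distance to $s_1$) produces the normalization factor $n^5/4 \cdot 2^{-2}$ on the denominator side of the lemma.

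Finally, I would apply the GHP coupling between $\qgn$ and $\qginf$ of \cite[Prop.~6]{Miermont:tessellations} exactly as at the end of the proof of Lemma~\ref{lemma:convergenceCell}: the discrete probability that three uniformly random source-vertices and three uniformly random target-vertices in $\qgn$ satisfy the Voronoi inequalities converges to the probability that three independent $\mu_\infty$-distributed ``target'' points $\vv_1^\infty, \vv_2^\infty, \vv_3^\infty$ lie in the respective Voronoi cells of three other independent $\mu_\infty$-distributed source points $\ww_1^\infty, \ww_2^\infty, \ww_3^\infty$ in $\qginf$; by Fubini this last quantity is exactly $\mathbf{E}[\YY_g^{(1:3)}\YY_g^{(2:3)}\YY_g^{(3:3)}]$. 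The main obstacle, as in the two-point case, is the careful verification that Miermont's leftmost-geodesic condition can indeed be replaced by the clean Voronoi inequalities at a subdominant cost, together with the precise accounting of the two parity corrections and of the rooting convention, since these determine the numerical constant $1/16$ that is essential for obtaining the value $1/60$ in Theorem~\ref{thm:3points}.
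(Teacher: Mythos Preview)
Your proposal is correct and follows essentially the same route as the paper's proof sketch: Miermont's bijection with $K=3$ sources, replacement of the leftmost-geodesic condition by strict Vorono\"i inequalities via Lemma~\ref{lemma:zeroProba}(ii), elimination of the two parity constraints by a scheme argument as in Lemma~\ref{lemma:equalContrib}, and passage to the limit via the GHP coupling of \cite[Prop.~6]{Miermont:tessellations}. The only cosmetic differences are that the paper keeps the rooting inherited from the three-face map and absorbs it into a global factor $2n$ (writing $A(n)=2n\cdot c_{g+2}^{(iii)}(n)$) rather than canonically re-rooting near $v_1$, and that it phrases the Vorono\"i event as ``$s_i$ closest to $v_i$ among the $v_j$'' (your equation is the dual ``$v_i$ closest to $s_i$ among the $s_j$''); since the six points are exchangeable both formulations yield the same moment.
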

\noindent The reader can understand heuristically the denominator in the previous expression as follows. The factor $n^5/4$ comes from the fact that we have $\sim n^6$ ways to mark 6 vertices (among $n+2-2g$) but that the quadrangulation is unrooted so we divide by $4n$. The factor $2^{-2}$ corresponds to the fact that we have two parity constraints relating the distances of the six points together (these constraints enable us to choose the delays in such a way that the target vertices get the same label while respecting the parity constraints on delays required by Miermont's bijection). We only sketch the proof of the lemma, since it is similar to what we did in the previous section.
\begin{proof}[{Proof of Lemma~\ref{combiToMoment3points}} (sketch)]

First, $A(n):=2n \cdot c_{g+2}^{(iii)}(n)$ counts \emph{rooted} labelled three-face maps of genus $g$ with $n$ edges, with faces numbered $F_1,F_2,F_3$, with three marked vertices $v_1,v_2,v_3$ such that $v_i$ is incident to the face $F_i$ only, and such that $\ell(v_1)=\ell(v_2)=\ell(v_3)$. For $(\epsilon_2,\epsilon_3)\in\{0,1\}^2$, we introduce a variant $A^{\epsilon_2,\epsilon_3}(n)$ of this number, counting the same objects but where the last property is replaced by $\ell(v_1)=\ell(v_2)-\epsilon_2=\ell(v_3)-\epsilon_3$. For such an object we let $\delta_i$ be the minimum label in face $F_i$ for $i=1,2,3$, and we fix a translation class of labels by assuming that $\delta_1=0$. We also note $\epsilon_1:=0$.

Let $L$ be a three-face map counted by $A^{\epsilon_2,\epsilon_3}(n)$ and $Q$ be its associated quadrangulation by  Miermont's bijection. Then $Q$ is a bipartite quadrangulation of genus $g$ with $n$ faces, carrying three source vertices $s_1,s_2,s_3$ and the three marked vertices $v_1,v_2,v_3$, and is such that
\begin{eqnarray}\label{eq:constraints3}
\ell(v_i) = d(v_i,s_i)+\delta_i \leq d(v_i,s_j)+\delta_j, i\neq j.
\end{eqnarray}
Indeed this equation says that the minimum defining the label 
$\ell(v_i):=\min_{1\leq j\leq 3} d(v_i,s_j)+\delta_j$ in the Miermont labelling of the delayed quadrangulation $Q$ is reached by its $i$-th argument, which corresponds to the fact that vertex $v_i$ is incident to the face $F_i$ in $L$.
Writing that $\ell(v_i)-\epsilon_i =\ell(v_j)-\epsilon_j$ and applying~\eqref{eq:constraints3}, we find that :
\begin{eqnarray}\label{eq:voro3}
d(v_i,s_i)-\epsilon_i \leq d(v_j,s_i)-\epsilon_j.
\end{eqnarray}
Since $|\epsilon_i-\epsilon_j|\leq 2$ we can say, loosely speaking, that up to an error at most $2$, $s_i$ is closer from $v_i$ than from other $v_j$'s in $Q$.
Note that another constraint from Miermont's bijection is that, in $Q$, we have that $d(s_i,s_j)\equiv \delta_i+\delta_j\mod 2$
 for all $i,j$,
or equivalently, from ~\eqref{eq:constraints3}, that $d(v_i,v_j)\equiv \epsilon_i-\epsilon_j \mod 2$.

Conversely, let $B^{\epsilon_2,\epsilon_3}(n)$ be the number of rooted bipartite quadrangulations of genus $g$ with $n$ faces and six marked vertices $s_1,s_2,s_3,v_1,v_2,v_3$ such that we have, for all $i\neq j$:
\begin{eqnarray}\label{eq:constraints3Strong}
d(v_i,s_i) < d(v_j,s_i)-2
\end{eqnarray}
and such that $d(v_i,v_j)\equiv \epsilon_i-\epsilon_j\mod 2$.
Given such an object, defining for each vertex $\ell(v)=\min_{1\leq i \leq 3}d(s_i,v)+\delta_i$, where $\delta_i:= d(v_i,s_i)-\epsilon_i+\epsilon_1-d(v_1,s_1)$, we see from~\eqref{eq:constraints3Strong} that the minimum defining $\ell(v_i)$ is reached only by its $i$-th argument, and that $\ell(v_1)=\ell(v_2)-\epsilon_2=\ell(v_3)-\epsilon_3$. This ensures that the three-face map associated to such a quadrangulation by the 
 (reverse) Miermont bijection is one of the objects counted by $A^{\epsilon_2,\epsilon_3}(n)$. In fact, the converse is true up to asymptotically negligible terms. Indeed, configurations counted by $A^{\epsilon_1,\epsilon_2}(n)-B^{\epsilon_2,\epsilon_3}(n)$ correspond to cases where for at least one $i\neq j$, \eqref{eq:voro3} holds but \eqref{eq:constraints3Strong} does not: such configurations are few by Lemma~\ref{lemma:zeroProba}(ii). Details are similar to the proof of Lemma~\ref{lemma:GeoToVor} and  we  obtain that 
$$B^{\epsilon_2,\epsilon_3}(n) = 2 A^{\epsilon_1,\epsilon_2}(n) + o(n^6 m_g(n)),$$ 
where the factor of $2$ comes from the 2-to-1 nature of Miermont's bijection.

Now, similarly as in Lemma~\ref{lemma:equalContrib}, it is easy to see that for different $\epsilon_1,\epsilon_2$ the numbers $A^{\epsilon_1,\epsilon_2}(n)$ have the same first order contribution and:
$$
\sum_{(\epsilon_1,\epsilon_2)\in\{0,1\}^2} A^{\epsilon_1,\epsilon_2}(n) \sim 4 \cdot A^{0,0}(n). 
$$
Now $\sum_{(\epsilon_1,\epsilon_2)\in\{0,1\}^2} B^{\epsilon_2,\epsilon_3}(n) $ counts rooted bipartite quadrangulations with marked vertices $v_1,v_2,v_3,$ $s_1,s_2,s_3$ such that \eqref{eq:constraints3Strong} holds, and in which no parity constraints remain. Recalling that $c_{g+2}^{(iii)}(n)=\tfrac{1}{2n}A^{0,0}(n)$ we finally get 
$$
c_{g+2}^{(iii)}(n) \sim \frac{1}{8\cdot 2n} n^6 m_g(n) 
\cdot \mathbf{P}\big\{\forall\, 1\leq i\neq j \leq 3:\, d(\vv_i,\sst_i) < d(\vv_j,\sst_i)-2\big\}
$$
where the probability is over $\qgn\in_u\QQgn$ and six uniform independent vertices $(\vv_1,\vv_2,\vv_3,\sst_1,\sst_2,\sst_3)$ in $\qgn$. Finally, arguing exactly as in the proof of Lemma~\ref{lemma:convergenceCell}, the last probability converges to $\mathbf{E}[\YY_g^{(1:3)}\YY_g^{(2:3)}\YY_g^{(3:3)}]$ and we are done. 
\end{proof}

From the last lemma and the expansion of $m_g(n)$ it follows that 
$$
c_{g+2}^{(iii)}(n) \sim 2^{-2-5g}/\Gamma(\tfrac{5}{2}g-\tfrac{1}{2})\tau_g
\cdot \mathbf{E}[\YY_g^{(1:3)}\YY_g^{(2:3)}\YY_g^{(3:3)}]
 \cdot n^{5+\frac{5}{2}(g-1)} 12^n.
$$
Comparing with the previously obtained expansion~\eqref{eq:cg3} of $c_{g+2}^{(iii)}(n)$ we find:
$$
\mathbf{E}[\YY_g^{(1:3)}\YY_g^{(2:3)}\YY_g^{(3:3)}] \sim \frac{1}{15 \cdot 2^2} =\frac{1}{60}
$$
as claimed!

It only remains to prove Lemma~\ref{lemma:eliminate}:
\begin{proof}[Proof of Lemma~\ref{lemma:eliminate}]
	We give a simple proof based on linear algebra, relatively brutal and (therefore) easily computerized. Let $E_0$ denote Equation~\eqref{eq:tgdiff}, and consider its derivatives $E_i:=(\tfrac{d}{ds})^i E_0$ for $i=1,2,3$. We thus obtain a polynomial system of four equations $\{E_i,0\leq i\leq 3\}$, involving the quantities $U_i=(\tfrac{d}{ds})^i U(s)$ for $0\leq i \leq 5$.  This system is \emph{linear} and \emph{triangular} in $U_2,U_3,U_4,U_5$ (note that it is \emph{not} linear in $U_0$ and $U_1$). We can then solve for these four quantities and we obtain an expression of each $U_i$ for $2 \leq i \leq 5$ as a (nonlinear) polynomial of $U_0=U(s)$ and $U_1=\tfrac{d}{ds}U(s)$ (with coefficients that are Laurent polynomials of $s$). 
Now expand the quantity $(5\tfrac{sd}{ds}-3 )_{((5))} \big(s^2U\big)$ as a linear combination of the $U_i$'s, and substitute the expressions just obtained of  $U_i$ for $i\geq 2$ in it.
We obtain an equation of the form:
$$
(5\tfrac{sd}{ds}-3 )_{((5))} \big(s^2U\big) = \mbox{Polynomial}(s,s^{-1},U(s),\tfrac{d}{ds}U(s))
$$
which, computations made, is~\eqref{eq:tgdiff2}.
\end{proof}

\section{A comment on nonorientable surfaces} An anonymous referee asked us to discuss the case of nonorientable surfaces. The  $t_g$-recurrence and Miermont's bijection both have analogues for nonorientable surfaces, respectively \emph{the $p_g$-recurrence}~\cite{Marino, Carrell}, and the bijection of~\cite{ChapuyDolega}. The construction of Brownian nonorientable surfaces has not been done yet, but this does not seem to be an intrinsic obstruction. A more serious problem to generalize our work is the fact that, when writing the analogue of Equation~\eqref{eq:Tutte} an additional term appears, corresponding to the case where the root edge is a ``twisted'' non-isthmic edge of the skeleton. This term is naturally expressed in terms of  the generating function of l.1.f.m. of genus $g+\frac{1}{2}$ with two marked corners in the unique face, whose labels differ by at most one. From there, in analogy with Theorem~\ref{thm:obs1}, it should be possible to write a recurrence for the numbers $p_g$, $t_g$ that involves, in addition to the second moment of the variable $\mathbf{X}_g$, the first moment of a random variable $\mathbf{Y}_{g+1/2}$ that would count, in some sense, ``the renormalized number of pairs of points at equal distance from the root in a large random quadrangulation of genus $g+\frac{1}{2}$''. 
We do not pursue here the task of writing this recurrence precisely nor of examining if any phenomenon arises that would enable one to conjecture a remarkable value for this first moment.
We conclude by mentioning the recent results of~\cite{ABACFG}. Motivated by the present paper, it is shown there  that the analogue of Conjecture~\ref{conjecture} is true for random one-face maps \emph{on any fixed surface}, orientable or not. Random one-face maps belong to a very different universality class than random quadrangulations, yet the fact that the random partition of mass induced by Vorono\"i tessellations is independent of the surface is as surprising as in the case studied here. This suggests that, if true,  our main conjecture should hold for nonorientable Brownian surfaces as well. Further evidence for this guess is that the analogue of Theorem~\ref{cor:main} holds for the projective plane, namely $\mathbf{E}\mathbf{X}_{1/2}(1-\mathbf{X}_{1/2})=\frac{1}{6}$ with obvious notation. We have checked this value using the bijection of~\cite{ChapuyDolega} that relates it to the count of certain labelled two-face maps on the projective plane, in the same way as in Lemma~\ref{lemma:convergenceCell}. In order to estimate the number of configurations we used a scheme decomposition and the methods of~\cite{CMS}. The computation, which is computer assisted, involves $11\cdot 5!$ different cases (there are 11 unrooted $2$-face maps on the projective plane with four trivalent vertices and two leaves with one leaf in each face. For each of them one has to consider the $5!$ different vertex labellings of the graph in which the two leaves are identified, in order to compute the corresponding contribution).

\bibliographystyle{alpha}
\bibliography{biblio}
\end{document}